\documentclass[a4paper,12pt]{article}

\usepackage[bbgreekl]{mathbbol}
\usepackage{mathrsfs}
\usepackage{graphicx}
\usepackage{amsmath}
\usepackage{amsfonts}
\usepackage{amssymb}
\usepackage{amsthm}
\usepackage{color}
\usepackage{cancel}
\usepackage{subfigure}
\usepackage{appendix}
\usepackage{bm}
\usepackage{comment}
\usepackage{epstopdf}
\usepackage{multirow}
\usepackage{colortbl}
\usepackage{tikz}
\usepackage{booktabs}
\usepackage{url}
\usepackage[colorlinks,linkcolor=blue,anchorcolor=green,citecolor=blue]{hyperref}

\newtheorem{theorem}{Theorem}[section]
\newtheorem{lemma}{Lemma}[section]
\newtheorem{remark}{Remark}[section]

\renewcommand{\theequation}{\arabic{section}.\arabic{equation}}
\renewcommand{\thetheorem}{\arabic{section}.\arabic{theorem}}
\renewcommand{\thelemma}{\arabic{section}.\arabic{lemma}}

\renewcommand{\thefigure}{\arabic{section}.\arabic{figure}}

\newcommand{\bvec}[1]{\mathbf{#1}}

\newcommand{\vk}{\bvec{k}}

\newcommand{\vn}{\bvec{n}}

\newcommand{\vp}{\bvec{p}}
\newcommand{\vq}{\bvec{q}}
\newcommand{\vr}{\bvec{r}}

\newcommand{\vw}{\bvec{w}}

\newcommand{\vG}{\bvec{G}}

\newcommand{\vR}{\bvec{R}}

\newcommand{\Z}{\mathbb{Z}}
\newcommand{\R}{\mathbb{R}}

\newcommand{\node}{\mathrm{\Xi}}

\newcommand{\Node}{\bm{\node}}

\newcommand{\Ne}{N_{\rm e}}
\newcommand{\dd}{{\rm d}}

\title{A Hybrid Discontinuous Galerkin Method with Isogeometric-Planewaves Coupling for Periodic Full-Potential Electronic Structure Calculations
}
\author{Baowei Lai \thanks{{\it baoweilai@mail.bnu.edu.cn}.
Faculty of Arts and Sciences, Beijing Normal University, Zhuhai 519087, Guangdong, China.}, ~Xucheng Meng\thanks{{\it xcmeng@bnu.edu.cn}.
Faculty of Arts and Sciences, Beijing Normal University, Zhuhai 519087, Guangdong, China \& Guangdong Provincial Key Laboratory of Interdisciplinary Research and Application for Data Science, 
Beijing Normal-Hong Kong Baptist University, Zhuhai 519087, Guangdong, China.} ~ and
~Xiaoxu Li\thanks{Corresponding. {\it xiaoxuli@bnu.edu.cn}.
Faculty of Arts and Sciences, Beijing Normal University, Zhuhai 519087, Guangdong, China.}
}
\date{}

\begin{document}
\maketitle

\begin{abstract}

Full-potential electronic structure calculations for periodic systems retain the Coulomb singularity at the nuclei, which induces cusp behavior of the orbitals near the nuclei while leaving the interstitial region smooth. 
This multiscale regularity motivates a hybrid discretization framework that combines localized real-space discretizations in atomic patches with plane waves in the interstitial region. 
In this work, we employ tensor-product B-splines in the atomic patches and couple the two approximation spaces through a symmetric interior penalty discontinuous Galerkin formulation, yielding the isogeometric-plane wave (IGA-PW) method. 
To efficiently evaluate the restricted plane wave integrals, we develop a combined fast Fourier transform and Chebyshev correction strategy. 
We also construct a trace-block DG preconditioner to alleviate the conditioning deterioration caused by the SIPG penalty terms and accelerate the iterative eigensolver. 
In addition, we establish {\it a priori} error estimates for the corresponding linear eigenvalue problem, showing algebraic convergence near the nuclei and superalgebraic convergence in the interstitial region. 
Numerical experiments demonstrate the accuracy and efficiency of the proposed method.
\end{abstract}

\vspace{1cm}
\textbf{Keywords:} Full-potential electronic structure calculations; plane waves;  isogeometric analysis; discontinuous Galerkin method.

\section{Introduction}\label{sec-intro} 
\setcounter{equation}{0}

Electronic structure calculations describe the energies and distributions of electrons, and play a fundamental role in materials science, quantum chemistry, solid-state physics, and condensed matter physics \cite{lebris03,martin05,saad2010numerical}. Among the various electronic structure models, Kohn-Sham density functional theory (DFT) \cite{hohenberg1964inhomogeneous,kohn1965self} is the most widely used framework for ground-state simulations, due to its effective balance between accuracy and computational cost \cite{lin2019numerical}.
For an $\Ne$-electron system with nuclei of charges $Z_k$ located at $\vR_k\in\mathbb{R}^3$ ($k=1,\ldots,M$), 
the Euler-Lagrange equations of the Kohn-Sham energy functional take the form
\begin{eqnarray}\label{eq:eigen}
H[\rho]\phi_i=\lambda_i\phi_i,
\qquad
\lambda_1\leq\lambda_2\leq\cdots\leq\lambda_{\Ne},
\end{eqnarray}
where the Kohn-Sham Hamiltonian is given by
\begin{eqnarray*}
H[\rho] = -\frac{1}{2}\Delta + V_{\rm ext} + V_{\rm H}[\rho] + V_{\rm xc}[\rho].
\end{eqnarray*}
Here $\displaystyle V_{\rm ext}({\vr})=-\sum_{k=1}^M\frac{Z_k}{|{\vr}-{\bf R}_k|}$ denotes the external potential generated by nuclear attraction,
$\displaystyle V_{\rm H} [\rho](\vr)=\int_{\mathbb{R}^3}\frac{\rho({\vr'})}{|{\vr}-{\vr'}|}{\dd \vr'}$ and
$ V_{\rm xc}[\rho]$ are the Hartree potential and exchange-correlation potential, respectively. 
The electron density $\displaystyle \rho(\vr)=\sum_{i=1}^{\Ne}|\phi_i({\vr})|^2$ determines $H[\rho_{\Phi}]$ and makes \eqref{eq:eigen} nonlinear.
It is usually solved by the self-consistent field (SCF) method, which repeatedly constructs the Hamiltonian from the current density, solves the resulting linear eigenvalue problem, and updates the density until convergence \cite{saad2010numerical}. In the periodic setting considered in this work, the problem is posed on a unit cell \(\Omega\), where the nuclei are periodically repeated over the underlying Bravais lattice and the orbitals satisfy periodic boundary conditions.

The nuclear Coulomb attraction induces cusp singularities in the orbitals near the nuclei. 
Two distinct approaches are commonly adopted in practice.
The {\it pseudopotential} approximation replaces the singular nuclear attraction and core-electron effects by a smooth effective potential \cite{martin05}, allowing plane wave discretizations to achieve superalgebraic convergence \cite{cances2012planewave}. 
However, the modeling error and transferability of pseudopotentials are difficult to quantify {\it a priori} \cite{cances2016optimal}.
Full-potential calculations, in contrast, retain the singular Coulomb attraction at the nuclei. The resulting orbitals exhibit a multiscale regularity structure, with low regularity near the nuclei and much higher regularity in the interstitial region \cite{flad08,fournais2002smooth}. Consequently, global plane wave discretizations lose their superalgebraic convergence in full-potential calculations, since the nuclear cusps destroy the global smoothness of the orbitals. 
Such a multiscale structure poses a fundamental challenge for spatial discretization: a uniform global basis must resolve both the singular near-nucleus behavior and the smooth interstitial region at the same approximation scale, leading to substantial over-resolution away from the nuclei.

Several discretization strategies have been developed for full-potential calculations, differing mainly in how they resolve the singular near-nucleus behavior and the smooth interstitial region. Methods based on linear combinations of atomic orbitals (Slater-type, Gaussian-type, and numerical atomic orbitals \cite{herring40,martin05}) efficiently capture the local singular behavior near the nuclei with relatively few degrees of freedom, but generally lack systematic convergence \cite{batcho2000computational}. The augmented plane wave (APW) and linearized augmented plane wave (LAPW) methods \cite{andersen75,sjostedt00,slater37} combine plane waves in the interstitial region with radial basis functions inside atomic spheres, and their approximation properties have been analyzed in \cite{chen2015apw}. However, the muffin-tin decomposition introduces additional interface matching conditions between different representations. Finite element methods \cite{bao2012adaptive,chen2014adaptive,motamarri2013higher,pask2001finite} provide systematic error control, but efficient resolution of the nuclear cusp typically requires local \(h\)- or \(p\)-adaptivity. These considerations naturally motivate hybrid discretizations that combine local resolution near the nuclei with high-order accuracy in the smooth interstitial region.

Isogeometric analysis (IGA), introduced by Hughes et al.\ \cite{hughes2005isogeometric}, provides high-order spline approximations with enhanced continuity and has been successfully applied to Kohn-Sham calculations \cite{cimrman2018convergence,cimrman2018isogeometric,masud2012bsplines,wang2025hierarchical}. 
However, single-patch IGA discretizations are still affected by the nuclear cusp singularity in full-potential calculations. Discontinuous Galerkin (DG) methods provide a natural and flexible framework for coupling different approximation spaces across subdomains, particularly when different regularity regimes require different local discretizations \cite{antonietti06}.

Within this framework, several multipatch DG-IGA methods \cite{CHAN201822,duvigneau2018isogeometric,moore2019space,proserpio2020framework} have been developed for spline coupling with varying polynomial degrees, mesh sizes, and local refinements. DG coupling has also been applied to adaptive local basis discretizations in electronic structure calculations \cite{lin2012adaptive,zhang2017adaptive}. For full-potential problems, \cite{li2025dgiga} developed a multi-patch DG-IGA framework with rigorous {\it a priori} eigenvalue error estimates. 
Related hybrid discretizations were also considered in \cite{li2019dg}, where radial basis functions inside atomic spheres were coupled with plane waves in the interstitial region.

In this paper, we present an Isogeometric-Plane Wave (IGA-PW) discretization for full-potential electronic structure calculations under periodic boundary conditions. 
The method employs tensor-product B-splines in localized atomic patches and plane waves in the interstitial region, with the two approximations coupled through a symmetric interior penalty DG formulation. 
This construction combines the local resolution capability of spline discretizations near the nuclei with the high-order efficiency of plane waves in the smooth interstitial region.

The main contributions of this work are threefold. 
First, the proposed framework replaces conventional atom-centered spherical decompositions by flexible atomic patches and naturally accommodates more general interior region decompositions and complex local geometries. 
Moreover, the framework is not tied to any particular local basis, allowing the use of spline, finite element, spectral element, and other localized real-space discretizations within the same DG coupling framework. 
Second, restricting plane waves to the interstitial region introduces discontinuous characteristic functions into the plane wave integrals, which deteriorates the convergence of direct FFT evaluation. To address this issue, we develop an FFT-Chebyshev integration strategy, where the smooth full-cell contribution is evaluated by FFT and the local atomic-patch contribution is removed through a Chebyshev correction. 
Third, since large penalty parameters in SIPG discretizations often lead to severe conditioning deterioration \cite{arnold2002unified,castillo2002performance}, we construct a trace-block DG preconditioner to accelerate the iterative eigensolver.
In addition, we establish an {\it a priori} error estimate for the IGA-PW approximation of full-potential linear eigenvalue problems, and numerical experiments demonstrate the accuracy and computational efficiency of the proposed method.

The remainder of this paper is organized as follows. 
In Section \ref{sec-problem}, we introduce the plane wave and B-spline approximation spaces and construct the IGA-PW discretization. 
Section \ref{sec: Error estimate} presents the main {\it a priori} error estimates, while the technical proofs are deferred to \ref{append:error_analysis}. 
In Section \ref{sec-implementation}, we discuss the efficient implementation of the proposed method, including the FFT-Chebyshev correction and the trace-block DG preconditioner. 
Numerical experiments are presented in Section \ref{sec:numerical-results} to demonstrate the accuracy and efficiency of the proposed method. 
Finally, concluding remarks are given in the last section.



\section{IGA-PW DG discretization}
\label{sec-problem} 
\setcounter{equation}{0}

In this section, we develop the IGA-PW discretization for periodic full-potential electronic structure calculations on the unit cell $\Omega=[-L/2,\,L/2]^d$ $(L>0,~ d=2,3)$ equipped with periodic boundary conditions. 
We first introduce the plane wave space and the B-spline space, together with their corresponding approximation estimates. 
Based on these ingredients, we then construct a symmetric interior penalty DG formulation, where the B-spline and the plane wave discretization are used in the distinct subregions.

\subsection{Plane wave approximation}

Let $\mathcal{R}= L\mathbb{Z}^d$ be a discrete periodic lattice and $\mathcal{R}^*=\frac{2\pi}{L}\Z^d$ be the dual lattice.
For $\vk\in\mathcal{R}^*$, we denote by $\displaystyle e_{\vk}(\vr)=|\Omega|^{-1/2} e^{\mathrm{i} \vk\cdot \vr}$ the plane wave
with the wavevector ${\bf k}$. 
The family $\{e_{\bf k}\}_{{\bf k}\in\mathcal{R}^*}$ forms an orthonormal basis set of
\begin{eqnarray*}
L_{\#}^2(\Omega)=\{u\in L^2_{\rm loc}(\R^d)~:~u~{\rm is}~\mathcal{R}{\rm -periodic}\}.
\end{eqnarray*}
For any $u\in L_{\#}^2(\Omega)$, we have
\begin{eqnarray*}
u(\vr)=\sum_{\vk\in\mathcal{R}^*} \hat{u}_{\vk} e_{\vk}(\vr) \quad \text{with} \quad \hat{u}_{\vk}
= (u, e_{\vk})_{L_{\#}^2(\Omega)} = |\Omega|^{-1/2} \int_{\Omega} u(\vr) e^{-\mathrm{i}{\vk\cdot \vr}} \dd\vr.
\end{eqnarray*}
We first define the Sobolev spaces of $\mathcal{R}$-periodic functions
\begin{eqnarray*}
H_{\#}^s(\Omega) = \left\{v=\sum_{\vk\in\mathcal{R}^*} \hat{v}_{\vk} e_{\vk}:~
\sum_{\vk\in\mathcal{R}^*} \left(1+|\vk|^2 \right)^s |\hat{v}_{\vk}|^2<\infty
\right\}
\end{eqnarray*}
with $s\in\mathbb{R}^+$.
For $K\in \mathbb{N}^+$, the associated finite-dimensional plane wave space is given by
\[
X_K
:=
\left\{
v\in L^2_{\#}(\Omega):~
v(\vr)=
\sum_{\substack{\vk\in\mathcal R^*\\ |\vk|\le 2\pi K/L}}
c_{\vk}e_{\vk}(\vr)
\right\}.
\]

For $v\in H^s_{\#}(\Omega)$, the orthogonal projection of $v$ onto $X_K$ takes the form
\begin{equation}
\Pi_K v=\sum_{\vk\in\mathcal{R}^*,|\vk|\leq \frac{2\pi}{L}K} \hat{v}_{\vk} e_{\vk},
\end{equation}
and the following estimate holds for any $0\leq t\leq s$,
\begin{equation}
\label{eq:add-rate-planewave}
\|v-\Pi_K v\|_{H^t_{\#}(\Omega)}
=
\min_{v_K\in X_K}\|v-v_K\|_{H^t_{\#}(\Omega)}
\leq C K^{t-s}\|v\|_{H^s_{\#}(\Omega)},
\end{equation}
where the constant $C$ is independent of $K$. 
It is shown that higher regularity of $v$ yields a faster plane wave approximation rate.

\subsection{B-spline approximation}

For positive integers $p$ and $n$, let  $\mathrm{\Xi} = \left \{0=\xi_1,\ldots,\xi_{n+p+1}=1\right\}$ be a non-decreasing sequence of real numbers in the parametric domain $[0,1]$.
Throughout this paper, we work with open knot vectors
\[
0=\xi_1=\cdots=\xi_{p+1}<\xi_{p+2}\le \cdots \le \xi_n
<\xi_{n+1}=\cdots=\xi_{n+p+1}=1.
\]

For a given $\mathrm{\Xi}$, the univariate B-spline basis functions of degree $p$ are defined by the Cox-de Boor recursion formula \cite{schumaker2007spline}
\begin{align}
B_{i,0}(\xi)  & = \left\{
\begin{array}{ll}\nonumber
1 \qquad \qquad &  \text{if} \quad \xi_{i} \leq \xi < \xi_{i+1}, \\[1ex]
0 & \text{otherwise}, \\
\end{array}\right. \\[1ex]
B_{i,p}(\xi)  & = \frac{\xi-\xi_{i}}{\xi_{i+p}
- \xi_{i}}B_{i,p-1}(\xi)
+\frac{\xi_{i+p+1}-\xi}{\xi_{i+p+1}-\xi_{i+1}}B_{i+1,p-1}(\xi) \qquad p\geq 1,
\end{align}
where any division by zero is defined to be zero. 
In general, a basis function of degree $p$ is $C^{p-m}$ across a knot with multiplicity $m$. For simplicity, we assume that all the internal knots have multiplicity one. Then the basis functions are globally $C^{p-1}$ on $(0, 1)$. 
In addition, each $B_{i,p}$ is non-negative and vanishes outside $(\xi_{i}, \xi_{i+p+1})$. We denote the corresponding
univariate spline space by
\[
S_p(\Xi):=\operatorname{span}\{B_{i,p}\}_{i=1}^n.
\]

In higher dimensions, the B-spline basis functions are constructed as tensor products of the univariate B-spline basis functions. 
Let \(\widehat{\Omega}:=[0,1]^d\) be the parametric domain, and let
\[
\Xi_\alpha=\{0=\xi_{1,\alpha},\ldots,\xi_{n_\alpha+p_\alpha+1,\alpha}=1\},
\qquad \alpha=1,\ldots,d,
\]
be open knot vectors in each coordinate direction. Set
\[
\mathbf p:=(p_1,\ldots,p_d),\qquad
\bm{\Xi}:=(\Xi_1,\ldots,\Xi_d),
\]
and define the multi-index set
\[
\mathcal I
:=
\left\{
\mathbf i=(i_1,\ldots,i_d):
1\le i_\alpha\le n_\alpha,\ \alpha=1,\ldots,d
\right\}.
\]
For each \(\mathbf i\in\mathcal I\), the tensor product B-spline basis function is defined by
\[
B_{\mathbf i,\mathbf p}(\bm\xi)
:=
\prod_{\alpha=1}^{d}
B^{(\alpha)}_{i_\alpha,p_\alpha}(\xi_\alpha),
\qquad
\bm\xi=(\xi_1,\ldots,\xi_d)\in\widehat{\Omega},
\]
where \(B^{(\alpha)}_{i_\alpha,p_\alpha}\) denotes the univariate B-spline basis function associated with the knot vector \(\Xi_\alpha\). The corresponding tensor product spline space is 
\[S_{\mathbf p}(\bm{\Xi}):=\operatorname{span}
\left\{B_{\mathbf i,\mathbf p}\right\}_{\mathbf i\in\mathcal I}.\] When \(p_\alpha=p\) for all \(\alpha=1,\ldots,d\), we write \(S_p(\bm{\Xi})\) and \(B_{\mathbf i,p}\) instead of \(S_{\mathbf p}(\bm{\Xi})\) and \(B_{\mathbf i,\mathbf p}\), respectively. Associated with the knot vectors $\Node$, the domain $[0,1]^d$ is partitioned into rectangles or rectangular prisms, forming a mesh $\mathcal{Q}_h$. For any $Q \in \mathcal{Q}_h$, we define $h_Q = \text{diam}(Q)$, and the global mesh size is given by $h = \max\{ h_Q, ~Q \in \mathcal{Q}_h \}$.

To transfer the spline construction from the parametric domain $\widehat{\Omega}$ 
to the physical patch $\Omega$, we introduce a bijective geometry map 
$G:\widehat{\Omega}\rightarrow\Omega$. The physical spline space  is then defined by
\[
V_h(\Omega)
:=
\{\widehat{v}\circ G^{-1}:\widehat{v}\in S_p(\bm{\Xi})\}.
\]
We use the standard tensor product spline quasi-interpolant $\widehat{\Pi}_h:L^2(\widehat{\Omega})\rightarrow S_p(\bm{\Xi})$ introduced in \cite{da2014mathematical}, and define the associated operator on the physical patch by
\[
\Pi_h v
:=
\bigl(\widehat{\Pi}_h(v\circ G)\bigr)\circ G^{-1},
\qquad
v\in L^2(\Omega).
\]
This operator satisfies the following approximation estimate \cite{schumaker2007spline}: for any $v\in H^t(\Omega)$ and $0\le r\le t\le p+1$,
\begin{equation}\label{eq:igaregularity}
\|v-\Pi_h v\|_{H^r(\Omega)}
\le
C h^{\,t-r}\|v\|_{H^t(\Omega)},  
\end{equation}
where the constant $C$ is independent of $h$.

\subsection{DG discretization}
\label{sec-discretization}

In this subsection, we construct DG discretization using the two 
approximation spaces introduced above. The computational domain 
is first decomposed into a region near the nuclei and an interstitial region. 
The wavefunction is then represented by B-splines in the region near the 
nuclei and by plane waves in the interstitial region, and the two 
representations are coupled by a symmetric interior penalty DG formulation.

As a model problem, we consider the Schr\"odinger-type eigenvalue problem: find $\lambda\in\mathbb{R}$ and $u\in H^1_{\#}(\Omega)$ such that $\|u\|_{L_{\#}^2(\Omega)}=1$ and
\begin{equation}
a(u,v)=\lambda(u,v) \qquad \forall v\in H^1_{\#}(\Omega),
\label{model-eq}
\end{equation}
where the bilinear form $a(\cdot,\cdot):H^1_{\#}(\Omega)\times H^1_{\#}(\Omega)\to\mathbb{C}$ is given by
\begin{equation}
a(u,v)=\frac12\int_\Omega \nabla u\cdot \nabla \overline v\,\dd\vr + \int_\Omega V u\overline v\,\dd\vr,
\label{bilinear-a}
\end{equation}
and $V\in L^2_{\#}(\Omega)$ is a real-valued $\mathcal R$-periodic potential.

To enable separate representations of the wavefunctions in different regions, the computational domain $\Omega$ is divided into atomic regions and an interstitial region (see Figure \ref{fig-division} (left) for the decomposition of a single-atom system, and Figure \ref{fig-division} (right) for a similar construction in a two-atom system).
Each atomic patch is a square neighborhood centered at the corresponding nucleus for $d=2$, or a cubic neighborhood for $d=3$. Unlike atom-centered spherical constructions, the present patch construction does not require precise alignment with the nuclear positions. It only requires that each nucleus remain inside the corresponding patch, providing greater flexibility for complex geometries and atomic configurations.

For simplicity, the construction is stated for a single atom located at the origin. A similar framework extends directly to multi-atom systems by taking the union of the corresponding atomic patches.
Throughout this paper, $\Omega_{\rm out}$ denotes the interstitial region and $\Omega_{\rm in}=[-R_{\rm in},R_{\rm in}]^d$
denotes the atomic region. The interface between the two regions is denoted by
$\Gamma:=\partial\Omega_{\rm in}$.
We assume that \(V\) is a real-valued \(\mathcal R\)-periodic potential with
isolated singularities at the nuclear positions.
Let \(\Sigma\) denote the set of all singular points, then
\[
V\in C_{\rm loc}^{\infty}(\mathbb R^d\setminus\Sigma)
\cap L^2_{\#}(\Omega)
\]
and $V$ has Coulomb-type singularities near the nuclei.
For a single atom at the origin, we have \(\Sigma=\mathcal R\).

\begin{figure}[ht]
\centering

\resizebox{0.8\linewidth}{!}{
    \begin{tikzpicture}
        \draw[black] (0,0) rectangle (6,4);
        \draw[black] (2,1) rectangle (4,3);
        \node at (3,2) {$\Omega_{\text{in}}$};
        \node at (0.5,0.5) {$\Omega_{\text{out}}$};
    \end{tikzpicture}
    \qquad \qquad \qquad \qquad
    \begin{tikzpicture}
        \draw[black] (0,0) rectangle (6,4);
        \draw[black] (1.5,1.5) rectangle (2.5,2.5);
        \draw[black] (3.5,1.5) rectangle (4.5,2.5);
        \node at (2,2) {$\Omega_{\text{in}}^1$};
        \node at (4,2) {$\Omega_{\text{in}}^2$};
        \node at (0.5,0.5) {$\Omega_{\text{out}}$};
    \end{tikzpicture}
}
\caption{The division of $\Omega$ into atomic regions $\Omega_{\rm in}$ and an interstitial region $\Omega_{\rm out}$.}
\label{fig-division}
\end{figure}

The regularity theory in \cite{fournais2002smooth,fournais04,fournais07} shows that the wavefunctions are smooth away from the nuclei, whereas the Coulomb singularities generate cusp behavior at the nuclear positions. 
As a result, the global regularity is limited by the nuclear singularities, so that a global plane wave discretization cannot achieve the high-order convergence rate in \eqref{eq:add-rate-planewave} for full-potential calculations.

Denote by $X_K(\Omega_{\rm out})$ the space of functions in $\Omega_{\rm out}$ expanded by plane waves
\begin{eqnarray*}
X_K(\Omega_{\rm out}) = \left\{v\in H^1(\Omega_{\rm out}):~v
= \sum_{|\vk|\leq \frac{2\pi}{L}K} c_{\vk} e_{\vk} \bigg|_{\Omega_{\rm out}} \right\}	
\end{eqnarray*}
and by $V_{h}(\Omega_{\rm in})$ the space of functions in $\Omega_{\rm in}$ generated by B-spline basis functions
\[
V_h(\Omega_{\rm in})
:=
\{\widehat{v}\circ G^{-1}:\widehat{v}\in S_p(\bm{\Xi})\},
\]
where $G:[0,1]^d\rightarrow\Omega_{\rm in}$ is the geometric mapping, which reduces to a simple affine rescaling in the current setting. 
Then the finite approximation space is defined by
\begin{align*}
\mathcal{S}^{K}_{h}(\Omega) :&=
X_K(\Omega_{\rm out})\oplus V_{h}(\Omega_{\rm in})
\\[1ex]
&= \left\{ v\in L^2_{\#}(\Omega):~v|_{\Omega_{\rm in}}\in V_{h}(\Omega_{\rm in}),~ v|_{\Omega_{\rm out}}\in X_K(\Omega_{\rm out}) \right\}.
\end{align*}

For vector-valued ${\bf w}$ and scalar-valued function $u$ which are not continuous on
the surface $\Gamma$, we introduce the jumps by
\begin{eqnarray*}
[{\bf w}]={\bf w}^+ \cdot {\bf n}^+ +{\bf w}^- \cdot {\bf n}^-,\quad\quad [u]=u^+{\bf n}^+
+u^-{\bf n}^-
\end{eqnarray*}
and the averages by
\begin{eqnarray*}
\{{\bf w}\}=\frac{1}{2}({\bf w}^+ +{\bf w}^-),\quad\quad \{u\}=\frac{1}{2}(u^++u^-),
\end{eqnarray*}
where $\vw^{\pm}$ and $u^{\pm}$ are traces of $\vw$ and $u$ on $\Gamma$ taken from inside and outside the atomic region, respectively, and $\vn^{\pm}$ are the corresponding outward normal unit vectors. Then the bilinear form  
$a^{\rm DG}(\cdot,\cdot): \mathcal{S}^{K}_{h} \times \mathcal{S}^{K}_{h}\rightarrow \mathbb C$
 is defined by
\begin{eqnarray}\label{eq-bilinear-DG}
\nonumber
a^{\rm DG}(u,v) &=& \int_{\Omega_{\rm in}}\left(\frac12\nabla u\cdot\nabla\overline v + V u\overline v\right)\,\dd\vr
+
\int_{\Omega_{\rm out}}\left(\frac12\nabla u\cdot\nabla\overline v + V u\overline v\right)\,\dd\vr\\[1ex]
&& -\frac12\int_\Gamma \{\nabla u\}\cdot [\overline v] \,\dd s
-\frac12\int_\Gamma \{\nabla \overline v\}\cdot [u] \,\dd s
+\sigma\int_\Gamma [u]\cdot [\overline v] \,\dd s,
\end{eqnarray}
where $\sigma=C_{\sigma}(K+h^{-1})$ is the discontinuity-penalization parameter with a sufficiently large constant $C_{\sigma}$ that is independent of $h$ and $K$.
Note that there are many other types of DG formulations (see, e.g., \cite{antonietti06,arnold2002unified}), 
and \eqref{eq-bilinear-DG} is the classical symmetric interior penalty Galerkin (SIPG) method \cite{antonietti06,arnold82,lin2012adaptive}.

Then we construct DG methods for the linear eigenvalue problem \eqref{model-eq}: find $\lambda^{\rm DG}
\in\mathbb{R}$ and $u^{\rm DG}\in \mathcal S_h^K(\Omega)$ such that 
$\|u^{\rm DG}\|_{L_{\#}^2(\Omega)} =1$ and
\begin{eqnarray}\label{eq-eigen-DG}
a^{\rm DG}(u^{\rm DG},v)=\lambda^{\rm DG}(u^{\rm DG},v)
\qquad\forall~v\in \mathcal S_h^K(\Omega).
\end{eqnarray}
We further define the broken Sobolev space
\begin{align*}
\widetilde{H}_{\#}(\Omega) = \{v\in L_\#^2(\Omega): v|_{\Omega_{\rm in}}\in H^1(\Omega_{\rm in}),\; v|_{\Omega_{\rm out}}\in H^1(\Omega_{\rm out})\} 
\end{align*}
equipped with the following DG-norm
\begin{eqnarray}\label{eq-norm-DG}
\|u\|_{\rm DG}^2
=
\|u\|_{H^1(\Omega_{\rm in})}^2
+
\|u\|_{H^1(\Omega_{\rm out})}^2
+
\sigma\|[u]\|_{L^2(\Gamma)}^2.
\end{eqnarray}

\section{Error estimates}
\label{sec: Error estimate}
\setcounter{equation}{0}

In this section, we establish an {\it a priori} error estimate for the IGA-PW approximation. 
The analysis builds on the DG-IGA framework of \cite{li2025dgiga}, while adapting it to the present hybrid setting, where local B-spline discretizations in the atomic patches are coupled with restricted periodic plane waves in the interstitial region. The two representations possess distinct approximation behaviors, and their interaction across the artificial interface must be incorporated into a unified eigenvalue analysis.

The resulting estimate naturally reflects the multiscale regularity of full-potential eigenfunctions: algebraic convergence is obtained near the nuclei, while spectral accuracy is retained in the smooth interstitial region. 
To the best of our knowledge, this is the first {\it a priori} eigenvalue error estimate for a DG formulation coupling local spline-based atomic discretizations with restricted periodic plane waves in full-potential calculations. 
Moreover, the analysis is not tied to the particular choice of B-spline basis functions, and the framework naturally extends to other localized basis discretizations as well as more general decompositions of the interior region. 
The main result is stated below, and the necessary proofs are given in \ref{append:error_analysis}.

For a nonzero eigenvalue $\lambda$ of \eqref{model-eq}, we denote by $M(\lambda)$ the associated eigenspace.
It was shown in \cite{fournais2002smooth,fournais04,fournais07} that eigenfunctions are smooth and, in fact, analytic away from the nuclei, while their regularity at the nuclei is limited by cusp behavior. 
Hence, the eigenspace satisfies that
\begin{equation}\label{eq:eigenspace}
M(\lambda) \subset H^{s_{\rm in}}(\Omega_{\rm in}) \oplus H^{\infty}(\Omega_{\rm out}).
\end{equation}

Throughout the sequel, the notation \(A\sim B\) means that \(A\) and \(B\) are of the same order up to constants independent of the discretization parameters, while \(A=O(B)\) means that \(A\) is bounded by \(B\) up to a constant independent of the discretization parameters.

\begin{theorem}[{\bf Error estimate of the eigenvalue problem}]
\label{them:convergence-rate}
Let $\lambda_i \neq 0$ be the $i$-th eigenvalue of \eqref{model-eq} and $(\lambda_i^{\rm DG}, u_i^{\rm DG})$ be the corresponding discrete eigenpair of \eqref{eq-eigen-DG}. 
Assume that $C_{\sigma}$ is sufficiently large and $h=O(K^{-1})$, then there exists $u_i\in M(\lambda_i)$ with $\|u_i\|_{L_{\#}^2(\Omega)}=1$ such that 
\begin{align}
\label{convergence-rate-eq}
\|u_i^{\rm DG}-u_i\|_{\rm DG}  & \leq C h^{k-1} \|u_i\|_{H^{k}(\Omega_{\rm in})} + C_s \left( K^{1-s} + \frac{K^{\frac{1}{2}-s}}{\sqrt{h}} \right), \qquad  \forall~s>1,
\end{align}
where $k=\min\{p+1,s_{\rm in}\}$.
Here $C$ and $C_s$ depend on the eigenvalue $\lambda_i$ but are independent of the discretization parameters $h$ and $K$, while $C_s$ may additionally depend on $s$ and $\|u_i\|_{H^s(\Omega_{\rm out})}$.
Moreover,
\begin{equation}
|\lambda_i^{\rm DG}-\lambda_i|
\le
C\|u_i^{\rm DG}-u_i\|_{\rm DG}^2.
\end{equation}
Furthermore, if $h\sim K^{-\alpha}$ with some $1\leq\alpha<3$, we have
\begin{equation}\label{eq:multiscale}
\|u_i^{\rm DG}-u_i\|_{L_{\#}^2(\Omega)} \leq C h^{\frac{3-\alpha}{2\alpha}} \|u_i^{\rm DG}-u_i\|_{\rm DG}.
\end{equation}
\end{theorem}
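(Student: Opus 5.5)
We follow the standard Babu\v{s}ka--Osborn framework for symmetric eigenvalue problems, adapted to the SIPG setting as in \cite{li2025dgiga}.

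\textbf{Step 1: coercivity and boundedness.} We first show that, for $C_\sigma$ sufficiently large, $a^{\rm DG}(\cdot,\cdot)$ shifted by a constant is coercive and bounded on $\mathcal S_h^K(\Omega)$ with respect to $\|\cdot\|_{\rm DG}$. The crucial ingredients are discrete trace inverse inequalities on $\Gamma$. On the spline side we use $\|\nabla v\|_{L^2(\Gamma)}\le Ch^{-1/2}\|\nabla v\|_{L^2(\Omega_{\rm in})}$. On the plane wave side we need an analogous bound with constant $K^{1/2}$: we aim for $\|\nabla v\|_{L^2(\Gamma)}\le CK^{1/2}\|\nabla v\|_{L^2(\Omega_{\rm out})}$ for $v\in X_K(\Omega_{\rm out})$. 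This is the step I expect to be the main obstacle, since restricted plane waves are not a local polynomial space. I would prove it through a Bernstein-type inequality for trigonometric polynomials of degree $K$, applied on a strip of width $\sim K^{-1}$ adjacent to $\Gamma$ inside $\Omega_{\rm out}$, combined with a scaled trace inequality on that strip. Using $\sigma=C_\sigma(K+h^{-1})$, Young's inequality then absorbs the consistency terms $\int_\Gamma\{\nabla u\}\cdot[\overline v]$. Because $V\in L^2$ has Coulomb singularities, the potential term is handled as a relatively bounded perturbation: by Hardy/Sobolev inequalities on each subdomain it is controlled by $\epsilon\|u\|^2_{H^1}+C_\epsilon\|u\|^2_{L^2}$, which gives a G\aa rding inequality.

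\textbf{Step 2: Galerkin orthogonality and best approximation.} The exact eigenfunction $u\in H^1_\#$ has no jump, and the bound \eqref{eq:eigenspace} gives it enough regularity for normal traces to exist. Consistency of SIPG therefore holds, and the discrete solution operator $T_{\rm DG}$ approximates $T=(a+\mu)^{-1}$ with $\|(T-T_{\rm DG})f\|_{\rm DG}\lesssim \inf_{v\in\mathcal S_h^K}\|Tf-v\|_{\ast}$. Here $\|\cdot\|_\ast$ is the DG norm augmented with $\sigma^{-1/2}\|\{\nabla\cdot\}\|_{L^2(\Gamma)}$. We take the interpolant $v=\Pi_h u$ on $\Omega_{\rm in}$ and $v=\Pi_K(Eu)|_{\Omega_{\rm out}}$ on $\Omega_{\rm out}$, where $E$ is a periodic $H^s$ extension of $u|_{\Omega_{\rm out}}$ into all of $\Omega$ (Stein extension), so that \eqref{eq:add-rate-planewave} applies with $\|Eu\|_{H^s_\#}\le C\|u\|_{H^s(\Omega_{\rm out})}$.
\begin{itemize}
\item The volume terms give $h^{k-1}\|u\|_{H^k(\Omega_{\rm in})}$ and $K^{1-s}$, using \eqref{eq:igaregularity}.
\item The jump term $\sigma^{1/2}\|[u-v]\|_{L^2(\Gamma)}$ is bounded by the multiplicative trace inequality. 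On the plane wave side, $\|w\|_{L^2(\Gamma)}^2\lesssim\|w\|_{L^2}\|w\|_{H^1}\lesssim K^{1-2s}$, and multiplying by $\sigma\sim h^{-1}$ (since $h=O(K^{-1})$) yields the term $K^{1/2-s}h^{-1/2}$. The spline side contributes $h^{k-1}$.
\item The flux terms are treated in the same way.
\end{itemize}

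\textbf{Step 3: eigen-convergence.} Spectral approximation theory for compact self-adjoint operators then yields the eigenfunction estimate \eqref{convergence-rate-eq}, with $u_i$ chosen as the normalized projection of $u_i^{\rm DG}$ onto $M(\lambda_i)$. For the eigenvalue, the symmetric identity
\[
\lambda^{\rm DG}-\lambda=a^{\rm DG}(u^{\rm DG}-u,u^{\rm DG}-u)-\lambda\|u^{\rm DG}-u\|_{L^2}^2
\]
is valid by consistency. Combined with boundedness of $a^{\rm DG}$ on the broken space, it gives the quadratic bound.

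\textbf{Step 4: $L^2$ estimate.} For \eqref{eq:multiscale} we use an Aubin--Nitsche duality argument. We solve the adjoint problem with right-hand side $u^{\rm DG}-u$; its solution $w$ lies in $H^2$ locally, as in \eqref{eq:eigenspace}. We then approximate $w$ in $\|\cdot\|_\ast$ using the same interpolant as in Step 2, now with $s=2$, $k=2$, which gives $h+K^{-1}+K^{-3/2}h^{-1/2}$. With $h\sim K^{-\alpha}$, the dominant contribution is $K^{-3/2}h^{-1/2}=h^{3/(2\alpha)-1/2}=h^{(3-\alpha)/(2\alpha)}$, which is the stated exponent; this term is positive precisely when $\alpha<3$.
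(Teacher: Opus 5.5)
Your architecture matches the paper's appendix: an SIPG estimate for the source operator, a duality argument, and transfer to eigenpairs via Babu\v{s}ka--Osborn, i.e.\ Lemmas A.2--A.4. Extending $u|_{\Omega_{\rm out}}$ by a Stein extension to an $H^s_\#$ function on the whole cell is a cleaner substitute for the paper's (SE) hypothesis.

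\textbf{The gap is in Step~1.} The inequality $\|\nabla v\|_{L^2(\Gamma)}\le CK^{1/2}\|\nabla v\|_{L^2(\Omega_{\rm out})}$ for $v\in X_K(\Omega_{\rm out})$ is false, so the Bernstein-on-a-strip argument cannot produce it.
\begin{itemize}
\item Bernstein bounds derivatives by $K$ times the norm on the \emph{whole} cell. The DG norm only sees $\Omega_{\rm out}$, and the underlying trigonometric polynomial can be exponentially larger (in $K$) inside $\Omega_{\rm in}$.
\item Along a segment normal to a face of $\Gamma$, $v$ is a trigonometric polynomial on a proper arc. Near an endpoint of an arc such polynomials behave like algebraic polynomials near the end of an interval: Markov replaces Bernstein, and the endpoint Christoffel function is of order $K^{-2}$.
\item The correct and sharp bound is therefore $\|\nabla v\|_{L^2(\Gamma)}\le CK\|\nabla v\|_{L^2(\Omega_{\rm out})}$.
\end{itemize}
Sharpness in 2D: take $v=T(x)g(y)$ with $g(y)=\cos^{2n}(\pi y/L)$, which is concentrated in $|y|<R_{\rm in}$ up to an $O(e^{-cn})$ leakage. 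Let $T$ have degree $m=\delta n$ and realize the endpoint Christoffel bound at $x=R_{\rm in}$ for the arc $\{|x|\ge R_{\rm in}\}$. For small fixed $\delta$, the $e^{Cm}$ growth of $T$ on $(-R_{\rm in},R_{\rm in})$ is killed by the leakage of $g$. This gives $\|v\|_{L^2(\Gamma)}\ge cK\|v\|_{L^2(\Omega_{\rm out})}$. Letting $T'$ be the peaked polynomial instead gives the gradient version.

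\textbf{Consequences.} The plane-wave part of the flux can be absorbed only if $\sigma\gtrsim K^2$. With $\sigma=C_\sigma(K+h^{-1})$ this holds when $h=O(K^{-2})$, but not under the stated $h=O(K^{-1})$. This is not merely a weakness of the argument. For $h\sim K^{-1}$, pair the peaked $v$ with a spline on $\Omega_{\rm in}$ whose trace makes the jump equal to $\partial_n v/(4\sigma)$. One checks that then $a^{\rm DG}(v,v)/\|v\|^2_{L^2(\Omega)}\to-\infty$, so coercivity fails.

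The same inverse estimate is used implicitly twice more:
\begin{itemize}
\item in Step~2, for boundedness of $a^{\rm DG}$ in $\|\cdot\|_\ast$ on discrete functions;
\item in Step~3, to return from $\|\cdot\|_\ast$ to $\|\cdot\|_{\rm DG}$.
\end{itemize}
The paper does not supply this step either: Lemma A.2 is stated without proof and with the same penalty. So you are not missing an idea from the text, but your argument is valid only under $h=O(K^{-2})$. Under that coupling $\sigma\sim h^{-1}\gtrsim K^2$. Your Steps~2--3 then give exactly the bound of the theorem, and Step~4 gives the $L^2$ estimate for $2\le\alpha<3$.

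\textbf{Two smaller points.}
\begin{itemize}
\item Your eigenvalue identity yields $C\bigl(\|u_i^{\rm DG}-u_i\|_{\rm DG}+\inf_v\|u_i-v\|_\ast\bigr)^2$, not literally $C\|u_i^{\rm DG}-u_i\|_{\rm DG}^2$.
\item The duality in Step~4 should be run for $T-T^{\rm DG}$ and transferred through the spectral projection, as in Lemmas A.3--A.4. This is needed because $a(\cdot,\cdot)-\lambda_i(\cdot,\cdot)$ is singular on $M(\lambda_i)$.
\end{itemize}
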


\begin{remark}[{\bf Superalgebraic convergence regarding $K$}]
The term
\[
C_s\left(K^{1-s}+\frac{K^{1/2-s}}{\sqrt h}\right)  \qquad  \forall~s>1
\]
indicates superalgebraic convergence with respect to the plane wave cutoff $K$. Although the convergence of $\frac{K^{1/2-s}}{\sqrt{h}}$ is slower than $K^{1-s}$ as $h=O(K^{-1})$, it still exhibits superalgebraic convergence behavior.
\end{remark}

\begin{remark}[{\bf Nonlinear eigenvalue problem}]
Within the framework of Kohn-Sham density functional theory, one needs to solve the nonlinear eigenvalue problem with the SCF iteration. 
Using our DG discretizations, the linear eigenvalue problem \eqref{eq-eigen-DG} is solved at each iteration step and mixing and stabilization schemes such as Roothaan, level-shifting, and DIIS algorithms (see, e.g., \cite{cances00,lebris03}) are used to accelerate or stabilize convergence.
	
If the exchange-correlation potential $V_{\rm xc}$ is sufficiently smooth and the trial states (from previous DG approximations) $\{\tilde{u}_1, \cdots, \tilde{u}_{\Ne}\}\in (\mathcal S_h^K)^{\Ne}$, then we have from arguments similar to those in \cite{flad08} that the eigenfunctions $\{u_i\}_{i=1}^{\Ne}$ of the Kohn-Sham Hamiltonian $H$ are smooth except at the positions of the nuclei.
This regularity together with the analysis in Theorem \ref{them:convergence-rate} gives the convergence rates for the DG approximations of the (linear) eigenvalue problem in each SCF iteration step.
	
Note that we have not provided an {\it a priori} error estimate for approximations of nonlinear eigenvalue problems, but only for linearized equations in SCF iterations.
We refer to \cite{cances2012planewave,chen13} for numerical analysis of nonlinear eigenvalue problems.
Nevertheless, the convergence behavior observed in our numerical experiments for the nonlinear problems is consistent with the theoretical results established for the corresponding linearized equations.
\end{remark}

\section{Efficient implementation}
\label{sec-implementation}
\setcounter{equation}{0}
\setcounter{figure}{0}

In this section, we discuss several implementation aspects of the proposed IGA-PW DG method. 
The hybrid discretization introduces three main computational challenges: the assembly of coupled DG matrices, the efficient evaluation of interstitial potential integrals, and the solution of the resulting ill-conditioned generalized eigenvalue problem. 

We first derive the explicit matrix representations associated with the plane wave block, the spline block, and the DG interface coupling terms. 
To accelerate the assembly of the interstitial potential matrices, we then develop an FFT-Chebyshev integration strategy that avoids the loss of efficiency caused by the discontinuous restriction to the interstitial region. 
Finally, we construct a trace-block DG preconditioner that exploits the sparsity structure induced by the DG penalty terms and significantly improves the convergence of iterative eigensolvers.

\subsection{Matrix assembly}
\label{subsec-integral}
Let \(\{e_{\vk_\vp}\}\) and \(\{\chi_{\vq}\}\) be the bases of \(X_K(\Omega_{\rm out})\) and
\(V_h(\Omega_{\rm in})\), respectively.  We write
\[
u^{\rm DG}
=
\sum_{\vp} (c_{\mathrm{out}})_{\vp}\,e_{\vk_\vp}
+
\sum_{\vq} (c_{\mathrm{in}})_{\vq}\,\chi_{\vq},
\]
and set
\[
c =
\begin{pmatrix}
c_{\mathrm{out}} \\
c_{\mathrm{in}}
\end{pmatrix}.
\]
Then the discrete eigenvalue problem takes the form
\begin{equation}
\label{eq-discrete-eig}
H c_i = \lambda_i^{\rm DG} M c_i, \qquad i=1,2,\cdots,N_{\rm e} .
\end{equation}
The Hamiltonian matrix $H$ and the overlap matrix $M$ have the block form
\[
H=
\begin{pmatrix}
H^a & H^c\\
(H^c)^* & H^b
\end{pmatrix},
\qquad
M=
\begin{pmatrix}
M^a & O\\
O & M^b
\end{pmatrix}.
\]
Here the superscripts $a$, $b$, and $c$ denote the plane wave block in 
$X_K(\Omega_{\rm out})$, the spline block in $V_h(\Omega_{\rm in})$, 
and the DG interface coupling block, respectively. The mixed overlap block vanishes
since the basis functions in \(X_K(\Omega_{\rm out})\) and \(V_h(\Omega_{\rm in})\)
have disjoint supports. 
We next present the explicit forms of the corresponding matrix entries for these three contributions.
\vskip 0.3cm
\noindent \textbf{(a)} For basis functions \(e_{\vk_\vp},e_{\vk_\vq}\in X_K(\Omega_{\rm out})\), we have
\begin{equation}
\label{eq-matrix-Ha}
H^a_{\vp\vq}
=
a^{\rm DG}\bigl(e_{\vk_\vq}|_{\Omega_{\rm out}},e_{\vk_\vp}|_{\Omega_{\rm out}}\bigr)
=
\frac{1}{2}\vk_\vp\cdot\vk_\vq\,U(\vk_\vq-\vk_\vp)
+V_{\rm out}(\vk_\vq-\vk_\vp)
+\mathfrak{D}^a_{\vp\vq},
\end{equation}
where
\[
U(\vk)
:=
\frac{1}{|\Omega|}\int_{\Omega_{\rm out}} e^{\mathrm{i}\vk\cdot\vr}\,\dd\vr
=
\delta_{\vk,\mathbf{0}}
-\frac{1}{|\Omega|}
\prod_{s=1}^d 2R_{\rm in}\,\operatorname{sinc}(k_sR_{\rm in}),
\]
and
\begin{equation}
\label{eq-Vout}
V_{\rm out}(\vk)
:=
\frac{1}{|\Omega|}
\int_{\Omega_{\rm out}}
V(\vr)e^{\mathrm{i}\vk\cdot\vr}\,\dd\vr .
\end{equation}
Here \(\vk=(k_1,\ldots,k_d)\), \(k_s\) denotes the \(s\)-th Cartesian component of
\(\vk\) and
\[
\operatorname{sinc}(t)=
\begin{cases}
\dfrac{\sin t}{t}, & t\neq 0,\\[1ex]
1, & t=0.
\end{cases}
\]
The DG contribution in \eqref{eq-matrix-Ha} is
\[
\mathfrak{D}^a_{\vp\vq}
=
\frac{\mathrm{i}}{4|\Omega|}
\int_{\Gamma}(\vk_\vq-\vk_\vp)\cdot \vn^+\,
e^{\mathrm{i}(\vk_\vq-\vk_\vp)\cdot\vr}\,\dd s
+
\frac{\sigma}{|\Omega|}
\int_{\Gamma} e^{\mathrm{i}(\vk_\vq-\vk_\vp)\cdot\vr}\,\dd s,
\]
where \(\vn^+\) is the outward unit normal vector of \(\Omega_{\rm in}\). Moreover,
\[
M^a_{\vp\vq}
=
\bigl(e_{\vk_\vq}|_{\Omega_{\rm out}},e_{\vk_\vp}|_{\Omega_{\rm out}}\bigr)
=
\frac{1}{|\Omega|}\int_{\Omega_{\rm out}}
e^{\mathrm{i}(\vk_\vq-\vk_\vp)\cdot\vr}\,\dd\vr
=
U(\vk_\vq-\vk_\vp).
\]

\noindent \textbf{(b)} For basis functions \(\chi_{\vp},\chi_{\vq}\in V_h(\Omega_{\rm in})\), we have
\[
H^b_{\vp\vq}
=
a^{\rm DG}(\chi_{\vq}|_{\Omega_{\rm in}},\chi_{\vp}|_{\Omega_{\rm in}})
=
\frac{1}{2}\int_{\Omega_{\rm in}}
\nabla\chi_{\vq}(\vr)\cdot\nabla\chi_{\vp}(\vr)\,\dd\vr
+
\int_{\Omega_{\rm in}} V(\vr)\chi_{\vq}(\vr)\chi_{\vp}(\vr)\,\dd\vr
+
\mathfrak{D}^b_{\vp\vq},
\]
where
\[
\label{dg-iga}
\mathfrak{D}^b_{\vp\vq}
=
-\frac{1}{4}\int_{\Gamma} (\nabla\chi_{\vq}\cdot\vn^+)\chi_{\vp}\,\dd s
-\frac{1}{4}\int_{\Gamma} (\nabla\chi_{\vp}\cdot\vn^+)\chi_{\vq}\,\dd s
+\sigma\int_{\Gamma}\chi_{\vq}\chi_{\vp}\,\dd s .
\]
Moreover,
\[
M^b_{\vp\vq}
=
\int_{\Omega_{\rm in}} \chi_{\vq}(\vr)\chi_{\vp}(\vr)\,\dd\vr .
\]

\noindent \textbf{(c)} For \(e_{\vk_\vp}\in X_K(\Omega_{\rm out})\) and \(\chi_{\vq}\in V_h(\Omega_{\rm in})\), we have
\begin{align*}
H^c_{\vp\vq}
&=
a^{\rm DG}\bigl(\chi_{\vq}|_{\Omega_{\rm in}},e_{\vk_\vp}|_{\Omega_{\rm out}}\bigr)\nonumber\\[1ex]
&=
\frac{1}{4|\Omega|^{1/2}}
\int_{\Gamma} (\nabla\chi_{\vq}\cdot\vn^+)
e^{-\mathrm{i}\vk_\vp\cdot\vr}\,\dd s
+
\frac{\mathrm{i}}{4|\Omega|^{1/2}}
\int_{\Gamma} (\vk_\vp\cdot\vn^+)
e^{-\mathrm{i}\vk_\vp\cdot\vr}\chi_{\vq}(\vr)\,\dd s \nonumber\\[1ex]
&\quad
-
\frac{\sigma}{|\Omega|^{1/2}}
\int_{\Gamma} e^{-\mathrm{i}\vk_\vp\cdot\vr}\chi_{\vq}(\vr)\,\dd s.
\end{align*}

\subsection{FFT-Chebyshev integration}
\label{subsec:pw-block-high-accuracy}
\numberwithin{table}{section}

Among all components in the matrix assembly, the most time-consuming part is the evaluation of the interstitial potential integral $V_{\rm out}(\vk)$ in \eqref{eq-Vout}. 
For a standard plane-wave discretization, such integrals can be efficiently computed by the fast Fourier transform (FFT). 
In the IGA-PW method, however, restricting the potential to $\Omega_{\rm out}$ introduces the discontinuous mask $\chi_{\Omega_{\rm out}}$. 
As a result, a direct FFT evaluation becomes significantly less efficient, since the jump discontinuity across $\Gamma$ slows down the convergence of the Fourier expansion and therefore requires substantially more Fourier modes to achieve the same accuracy.

In this subsection, we present an efficient evaluation strategy for $V_{\rm out}(\vk)$. For simplicity, the construction is described in two dimensions, while the three-dimensional case follows analogously.

To handle the discontinuity introduced by the restriction to $\Omega_{\rm out}$, we first replace \(V\) inside \(\Omega_{\rm in}\) by a smooth periodic extension \(\widetilde{V}\) that agrees with \(V\) on
\(\Omega_{\rm out}\) (see Figure \ref{fig:ex1-potential-smooth}). 
Let $0<b<a_c<R_{\rm in}$ and $g_0=V(b,0)$. Define
\begin{equation*}
\label{eq:smooth-extension-potential}
\widetilde{V}(\vr)
=
\left\{
\begin{array}{ll}
g_0, 
& r\le b,\\[1ex]
(1-\eta(r))V(\vr)+\eta(r)g_0, 
& b<r<a_c,\\[1ex]
V(\vr), 
& r\ge a_c,
\end{array}
\right.
\end{equation*}
where  \(r=|\vr|\) and
\begin{equation*}
\label{eq:smooth-cutoff-potential}
\eta(r)
=
1-\theta\!\left(\frac{r-b}{a_c-b}\right),
\qquad
\theta(t)
=
\frac{s(t)}{s(t)+s(1-t)},
\qquad
s(t)
=
\left\{
\begin{array}{ll}
e^{-1/t}, & t>0,\\[0.5ex]
0,        & t\le 0 .
\end{array}
\right.
\end{equation*}
Since the modification is confined to $\Omega_{\rm in}$, we have
$\widetilde V=V$ in $\Omega_{\rm out}$. 

\begin{figure}[htbp]
\centering
\includegraphics[width=0.6\textwidth]{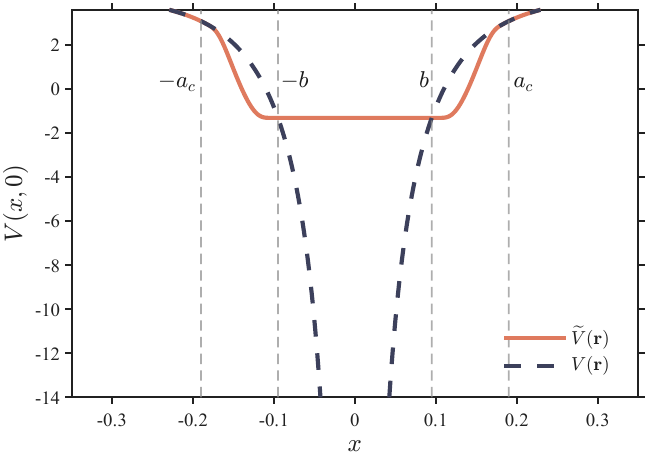}
\caption{The original potential $V(\vr)$ and its smoothed extension
$\widetilde{V}(\vr)$.}
\label{fig:ex1-potential-smooth}
\end{figure}

It is then straightforward to see that \eqref{eq-Vout} admits
the decomposition
\begin{equation}
\label{eq:full-minus-inner-decomposition}
V_{\rm out}(\vk)
=
\frac{1}{|\Omega|}
\int_{\Omega}\widetilde{V}(\vr)e^{\mathrm{i}\vk\cdot\vr}\,\dd\vr
-
\frac{1}{|\Omega|}
\int_{\Omega_{\rm in}}\widetilde{V}(\vr)e^{\mathrm{i}\vk\cdot\vr}\,\dd\vr
=: J_{\Omega}(\vk)-J_{\rm in}(\vk).
\end{equation}
The full-cell integral $J_{\Omega}$ can be evaluated by the FFT with spectral accuracy. 
For a given wave vector $\vk=(k_1,k_2)$, it remains to compute the inner-patch contribution in $\Omega_{\rm in}=[-R_{\rm in}, R_{\rm in}]^2$

\begin{equation}\label{eq:J_in}
    J_{\rm in}(\vk)
=
\frac{1}{|\Omega|}
\int_{-R_{\rm in}}^{R_{\rm in}}
\int_{-R_{\rm in}}^{R_{\rm in}}
\widetilde V(x,y)
e^{\mathrm{i}(k_1x+k_2y)}
\,\dd x\,\dd y .
\end{equation}
We approximate $\widetilde{V}$ in $\Omega_{\rm in}$ by a tensor product
Chebyshev expansion of the first kind
\begin{equation}
\label{eq:inner-chebyshev-expansion}
\widetilde{V}(x,y)
\approx
\sum_{r=0}^{n-1}\sum_{s=0}^{n-1}
c_{rs}\,
T_r\!\left(\frac{x}{R_{\rm in}}\right)
T_s\!\left(\frac{y}{R_{\rm in}}\right),
\end{equation}
where $T_r$ denotes the Chebyshev polynomial of degree $r$. Since
$\widetilde{V}$ is smooth on $\Omega_{\rm in}$, the approximation
\eqref{eq:inner-chebyshev-expansion} converges spectrally with respect to the polynomial degree.
To evaluate \eqref{eq:J_in}, we introduce the one-dimensional
Chebyshev integrals
\[
q_r(k)
:=
\int_{-R_{\rm in}}^{R_{\rm in}}
T_r\!\left(\frac{x}{R_{\rm in}}\right)
e^{\mathrm{i}k x}\,\dd x.
\]
Substituting \eqref{eq:inner-chebyshev-expansion}
into \eqref{eq:J_in} yields
\begin{equation}
\label{eq:inner-chebyshev-batched}
J_{\rm in}(k_{1},k_{2})
\approx
\frac{1}{|\Omega|}
\sum_{r=0}^{n-1}\sum_{s=0}^{n-1}
c_{rs}\,q_r(k_{1})\,q_s(k_{2})
=
\frac{1}{|\Omega|}
\left(Q^{T} C Q\right)_{k_1,k_2},
\end{equation}
where
\[
Q_{r\alpha}=q_r(k_\alpha),
\qquad
C_{rs}=c_{rs}.
\]
The matrix formulation
\eqref{eq:inner-chebyshev-batched} evaluates all wave vectors
simultaneously, thereby avoiding independent two-dimensional
quadratures for each mode and significantly reducing the assembly cost.

We demonstrate this efficiency in
Table~\ref{tab:pw-block-low-order-cheb-k25_m256}, which compares the accuracy and computational cost of evaluating
\eqref{eq-Vout} in the two- and three-dimensional examples in Section~\ref{sec:numerical-results}. 
Here, the masked FFT method refers to the direct FFT evaluation obtained by setting the grid values inside $\Omega_{\rm in}$ to zero on the global FFT grid. 
The results show that the proposed
FFT--Chebyshev integration strategy achieves substantially higher accuracy with significantly lower computational cost.

\begin{table}[htbp]
\centering
\caption{Comparison of the accuracy and computational time for evaluating \eqref{eq-Vout}.}
\label{tab:pw-block-low-order-cheb-k25_m256}
\begin{tabular}{llllll}
\toprule
Dimension & Method & FFT grid & $n$ & Time (s) & $\|V_{\rm out}
-V_{\rm out}^{\rm ref}\|_2$
 \\
\midrule
2D & Masked FFT & $32768^2$ & - & 457.39& $2.82\times 10^{-4}$ \\
2D & FFT-Chebyshev & $256^2$ & 48 & 0.92 & $3.70\times 10^{-5}$ \\
3D & Masked FFT & $2048^3$ & - & 792.72 & $4.34\times 10^{-3}$ \\
3D & FFT-Chebyshev & $300^3$& 80 & 2.45 & $2.23\times 10^{-6}$ \\
\bottomrule
\end{tabular}
\end{table}

\subsection{Trace-block DG preconditioning}
\label{subsec-precondition}
 
To solve the generalized eigenvalue problem \eqref{eq-discrete-eig}
by an iterative eigensolver, an effective preconditioner is essential.
We consider the shifted matrix
\[
A_{\tau}=H-\tau M,
\]
where $\tau$ is chosen close to the target eigenvalue.
As the mesh is refined, the DG penalty parameter $\sigma = C_\sigma(K+h^{-1})$ becomes large, and the resulting interface coupling severely deteriorates the conditioning of $A_{\tau}$, thereby slowing down the eigensolver.
A standard Jacobi (diagonal) preconditioner
\cite{wathen2015preconditioning}
fails to capture these interface couplings and is therefore insufficient.
To address this issue, we propose a trace-block DG preconditioner
(TB-DG), which treats the interface-coupled degrees of freedom
through an exact block solve while applying diagonal scaling to the
remaining interior degrees of freedom.

The main conditioning difficulty in \(A_\tau=H-\tau M\) comes from the DG penalty contribution, which we write explicitly as
\begin{equation*}
A_{\tau}
=
A_{\tau,0}
+\sigma P_{\Gamma},
\end{equation*}
where \(A_{\tau,0}\) collects all terms independent of the penalty parameter \(\sigma\).
Since most B-spline basis functions are locally supported away from the interface \(\Gamma\), the penalty matrix \(P_\Gamma\) is sparse and acts only on a small subset of degrees of freedom.

To exploit this structure, we partition the degrees of freedom into
\[
\gamma
:=
\Bigl\{
j:\sum_{k}|(P_{\Gamma})_{jk}|\neq 0
\Bigr\},
\qquad
\eta
:=
\{1,\ldots,N\}\setminus\gamma.
\]
Here \(\gamma\) contains all plane wave degrees of freedom together
with the B-spline basis functions having nonzero trace on
\(\Gamma\), while \(\eta\) contains the remaining interior
B-spline degrees of freedom. Reordering the indices accordingly gives
\[
P_{\Gamma}
=
\begin{pmatrix}
O & O\\
O & P_{\Gamma,\gamma\gamma}
\end{pmatrix},
\qquad
A_{\tau}
=
\begin{pmatrix}
A_{\tau,\eta\eta} & A_{\tau,\eta\gamma}\\
A_{\tau,\gamma\eta} & A_{\tau,\gamma\gamma}
\end{pmatrix}.
\]
In particular, the penalty contribution is entirely confined to the
\(\gamma\)-block, whereas \(A_{\tau,\eta\eta}\) remains independent
of \(\sigma\).

Motivated by this decomposition, we introduce the trace-block DG
preconditioner
\begin{equation}
\label{eq-precondition-ib}
P^{-1}
=
\begin{pmatrix}
D_{\eta}^{-1} & O\\
O & (A_{\tau,\gamma\gamma}+\delta I)^{-1}
\end{pmatrix},
\end{equation}
where $\displaystyle D_{\eta}
= |\operatorname{diag}(A_{\tau,\eta\eta})|$
denotes the absolute diagonal of the interior block.
The shift \(\delta\ge0\) is chosen to ensure the positive
definiteness of \(A_{\tau,\gamma\gamma}\).
The interior degrees of freedom are treated by inexpensive diagonal
scaling, while the interface-coupled block is handled through a sparse
exact or incomplete Cholesky factorization of
\(A_{\tau,\gamma\gamma}+\delta I\).

To assess the effectiveness of the proposed preconditioner,
Figure~\ref{fig:precondition} compares the unpreconditioned system, the Jacobi preconditioner, and the TB--DG preconditioner for Example~3 in Section~\ref{sec:numerical-results}.
Panels~(a) and~(b) show that the TB-DG preconditioner achieves the
smallest condition number and the shortest total solution time,
including the preconditioner setup cost.
Panel~(c) shows the convergence of the residual norm during the iterative eigensolve.
Without preconditioning or with the Jacobi preconditioner, the
residual decreases slowly and does not reach the tolerance
\(10^{-3}\) within \(1000\) iterations.
In contrast, the TB--DG preconditioner reduces the residual to
\(10^{-5}\) within about \(100\) iterations for \(p=1,2,3\), and the
convergence rate is only mildly affected by the spline order.

\begin{figure}[h!]
\subfigure[]{\includegraphics[width=0.32\textwidth]{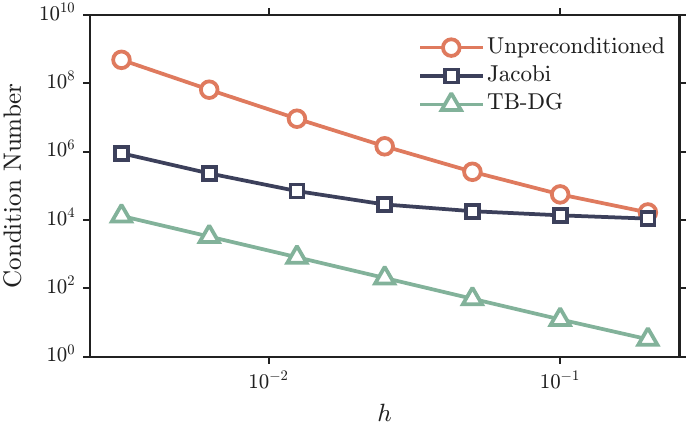}}
\subfigure[]{\includegraphics[width=0.32\textwidth]{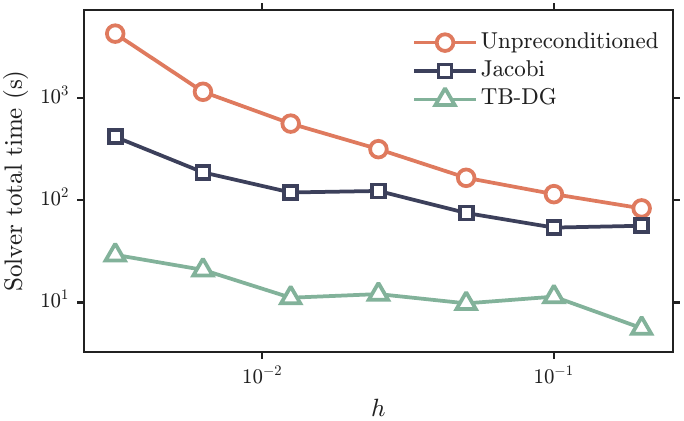}}
\subfigure[]{\includegraphics[width=0.33\textwidth]{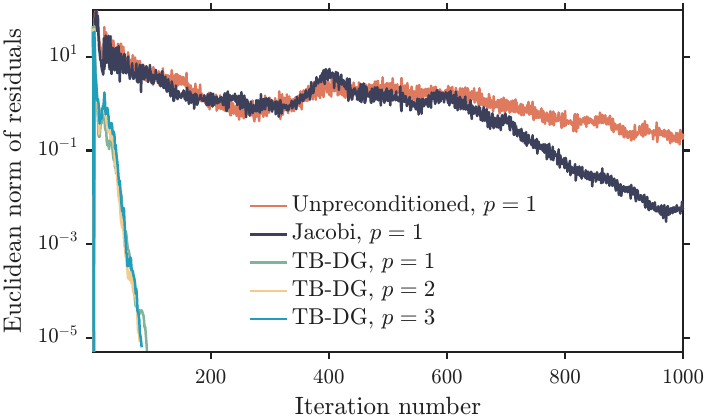}}
\caption{
Comparison of the unpreconditioned system, the Jacobi preconditioner, and the TB-DG preconditioner:
(a) condition numbers;
(b) total solution time, including both preconditioner setup and eigensolver time;
(c) convergence of the residual norm during the iterative eigensolve.
}
\label{fig:precondition}
\end{figure}

\section{Numerical experiments}
\label{sec:numerical-results}
\setcounter{equation}{0}
\setcounter{figure}{0}

In this section, we present numerical experiments for electronic structure problems. 
The experiments are designed to verify the convergence estimates in Theorem~\ref{them:convergence-rate} and to demonstrate the computational efficiency of the IGA-PW discretization on representative model problems.

The potential \(V(\vr)\) is chosen as the Ewald representation of the periodized Coulomb interaction, which decomposes the singular kernel into a short-range contribution and a smooth reciprocal-space summation. The two-dimensional Ewald formulation follows \cite{holzmann2005optimized}, while the three-dimensional counterpart is taken from~\cite{natoli1995optimized}. 
Throughout the experiments, the generalized eigenvalue problems are solved using the eigensolver PRIMME \cite{PRIMME2017algorithm}. 
Reference solutions are computed by the IGA--PW method on sufficiently fine discretizations. 
The implementation code is available in~\cite{DG_IGA_Eigenvalue_Codes}.

\vskip 0.5cm

\noindent{\bf Example 1 (2D linear problem for a single-atom system).}
We first consider the two-dimensional linear eigenvalue problem: find $\lambda\in\mathbb{R}$ and $u\in H^1_{\#}(\Omega)$ with $\|u\|_{L_\#^2(\Omega)}=1$ such that
\[
\left(-\frac{1}{2}\Delta +V \right)u = \lambda u,
\]
where $\Omega=[-2,2]^2$ and $\Omega_{\rm in}=[-0.2,0.2]^2$.
The Ewald potential takes the form
\begin{equation}
\label{eq:ex1-potential}
 V(\vr) = -\frac{\operatorname{erfc}(\alpha |\vr|)}{| \vr|}
 - \frac{2 \pi}{|\Omega|}\sum_{\substack{\vG\in\mathcal{R}^*\\ \vG \ne \bm 0}}
 \frac{1}{|\vG|}\operatorname{erfc}\!\left(\frac{|\vG|}{2\alpha}\right)e^{\mathrm{i} \vG \cdot \vr}
 + \frac{2\alpha}{\sqrt{\pi}}
\end{equation}
with $\alpha=5$.
In practice, only a few reciprocal-space modes with \(|\vG|\le2\) are retained in the summation, since the complementary error function \(\operatorname{erfc}\) decays exponentially fast.

We first show the degrees of freedom and the total time required for the plane wave, IGA, and IGA-PW discretizations in Figure~\ref{fig:ex1-compare}. At the same accuracy level, the IGA-PW method requires fewer degrees of freedom and less computational time than the plane wave and IGA methods, demonstrating the superior efficiency of the IGA-PW discretization.

\begin{figure}[h!]
\subfigure[]{\includegraphics[width=0.48\textwidth]{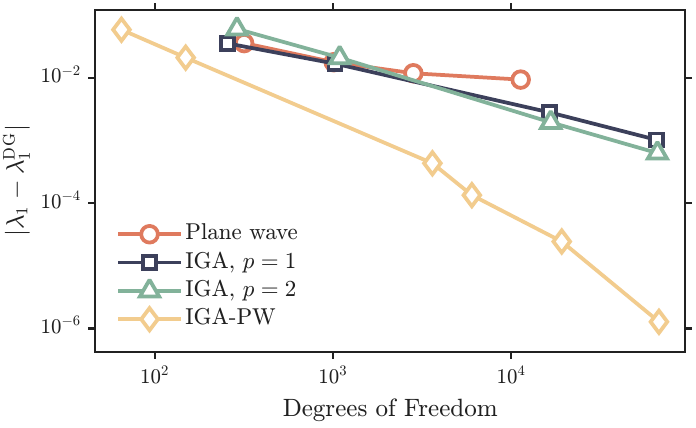}}
\subfigure[]{\includegraphics[width=0.48\textwidth]{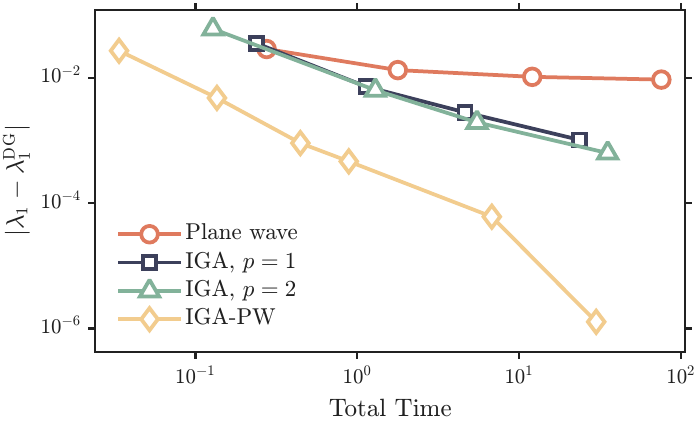}}
\caption{(Example 1) Comparison of the degrees of freedom and computational time for the plane wave, IGA, and IGA-PW methods.}
\label{fig:ex1-compare}
\end{figure}

We then examine the convergence with respect to the spline mesh size in
Figures~\ref{fig:ex1-eig-h} and \ref{fig:ex1-eigfunc-h}. The first two eigenvalue errors exhibit second-order convergence, which is limited by the cusp singularity of the corresponding eigenfunctions near the nucleus \cite{maday2019regularity}, see also Figure~\ref{fig:ex1-ref}. 
For the smoother eigenfunctions, higher-order convergence is recovered as the B-spline degree increases, and the eigenfunction errors measured in both \(L^2\) and DG norms exhibit the same behavior. In particular, the third and fourth eigenvalues form a degenerate pair, and the numerical results show that the proposed method remains stable and accurate in the presence of eigenvalue multiplicity. Overall, the observed convergence rates agree well with Theorem~\ref{them:convergence-rate}.

\begin{figure}[h!]
\subfigure[$p = 1$]{\includegraphics[width=0.48\textwidth]{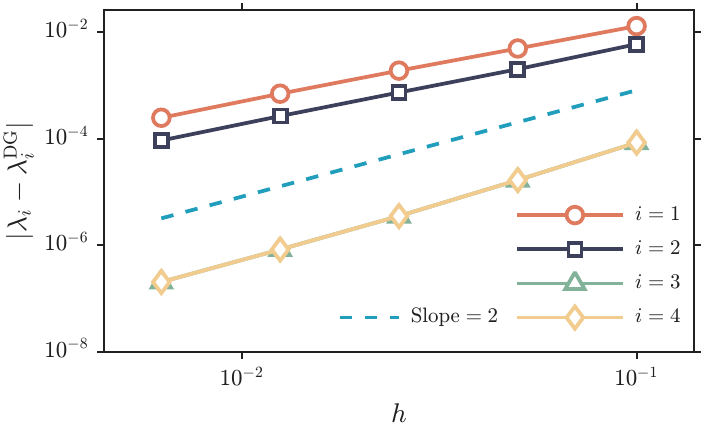}}
\subfigure[$p = 2$]{\includegraphics[width=0.48\textwidth]{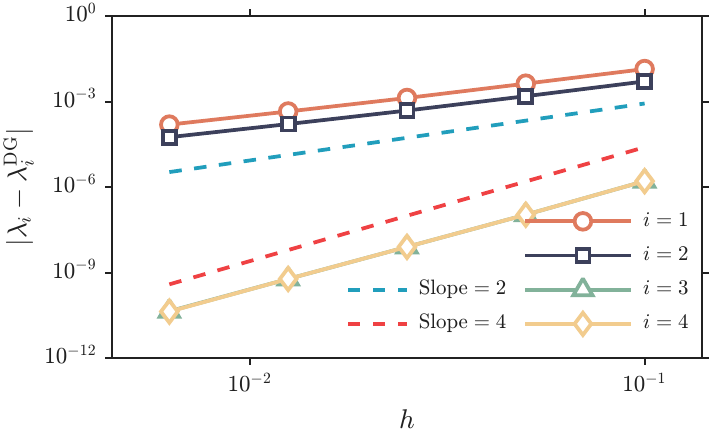}}
\caption{(Example 1) Convergence of the eigenvalues with respect to $h$ for $K=30$.}
\label{fig:ex1-eig-h}
\end{figure}

\begin{figure}[h!]
\subfigure[$p = 1$]{\includegraphics[width=0.48\textwidth]{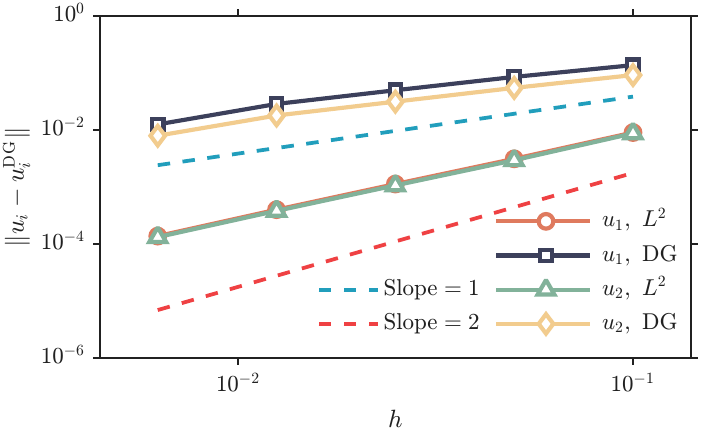}}
\subfigure[$p = 2$]{\includegraphics[width=0.48\textwidth]{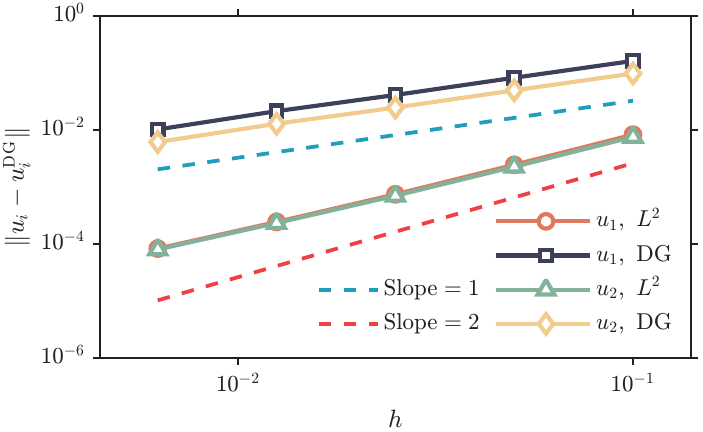}}
\caption{(Example 1) Convergence of the eigenfunctions in $L^2$ and DG norms with respect to $h$ for $K=30$.}
\label{fig:ex1-eigfunc-h}
\end{figure}

\begin{figure}[h!]
\subfigure[$u_1$]{\includegraphics[width=0.24\textwidth,height=0.24\textwidth]{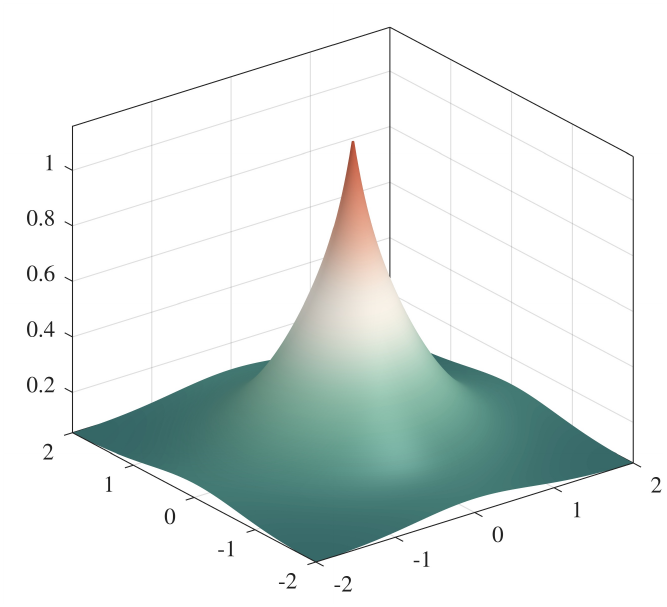}}
\subfigure[$u_2$]{\includegraphics[width=0.24\textwidth,height=0.24\textwidth]{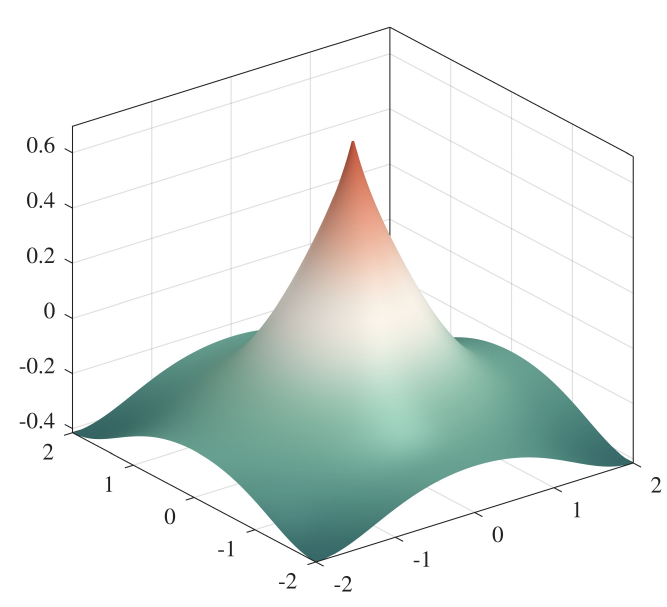}}
\subfigure[$u_3$]{\includegraphics[width=0.24\textwidth,height=0.24\textwidth]{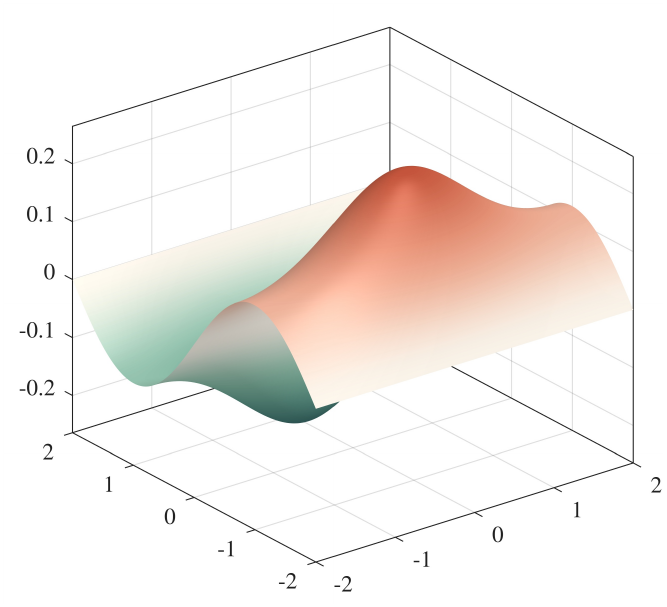}}
\subfigure[$u_4$]{\includegraphics[width=0.24\textwidth,height=0.24\textwidth]{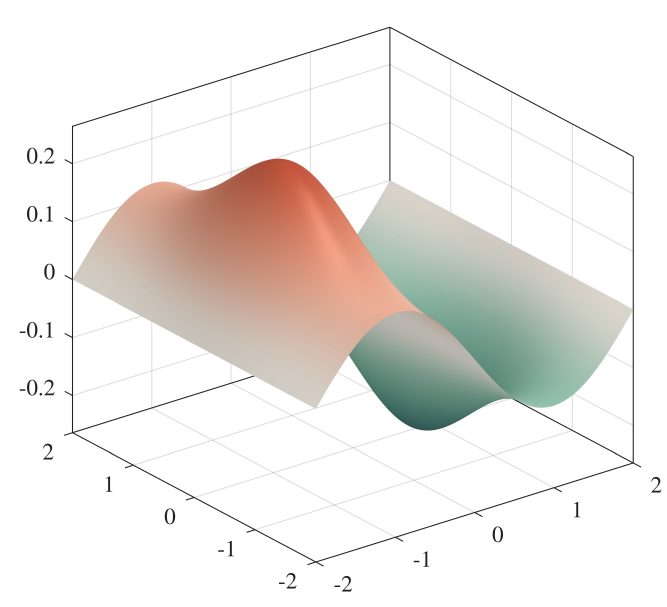}}
\caption{(Example 1) The reference eigenfunctions.}
\label{fig:ex1-ref}
\end{figure}

We next fix the spline discretization and vary the plane wave cutoff in Figure~\ref{fig:ex1-k}. It is observed that the errors decay rapidly as $K$ increases, which confirms the superalgebraic plane wave contribution in Theorem~\ref{them:convergence-rate}. 
Figure~\ref{fig:ex1-scaled-errors} validates the multiscale refinement estimate~\eqref{eq:multiscale}, where $h\sim K^{-\alpha}$ is adopted. It is observed that the scaled DG error $h^\gamma\|u_1-u_{1}^{\mathrm{DG}}\|_{\mathrm{DG}}$ exhibits the same decay behavior as the $L^2$ error.

Finally, Figure~\ref{fig:ex1-penalty} illustrates the effect of the penalty parameter \(C_\sigma\) and the TB-DG preconditioner. 
The penalty parameter \(C_\sigma\) in the SIPG bilinear form is required to be sufficiently large in order to ensure coercivity and stability \cite{arnold2002unified}.
Figure~\ref{fig:ex1-penalty}(a) shows that the eigenvalue error is essentially insensitive to \(C_\sigma\) once it exceeds a moderate threshold. Moreover, the curves obtained with and without the TB-DG preconditioner almost coincide, indicating that the preconditioner does not affect the discrete eigenvalue approximation. Figure~\ref{fig:ex1-penalty}(b) further shows that the TB-DG preconditioner significantly reduces the condition number over a wide range of \(C_\sigma\).

\begin{figure}[h!]
\subfigure[]{\includegraphics[width=0.48\textwidth]{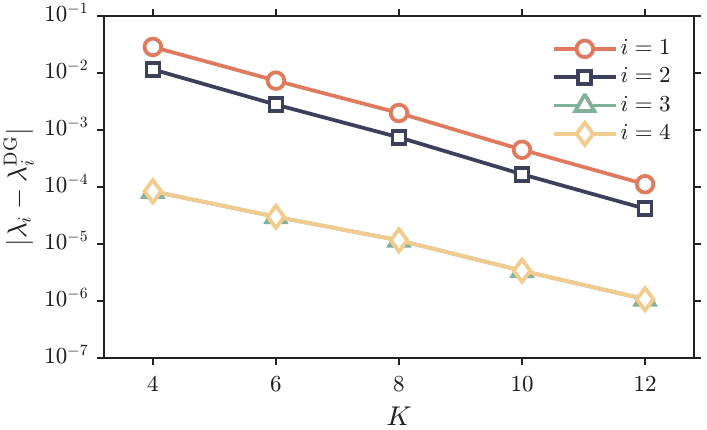}}
\subfigure[]{\includegraphics[width=0.48\textwidth]{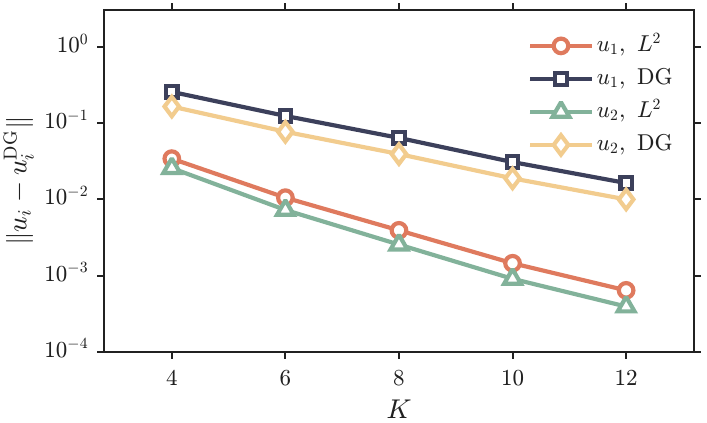}}
\caption{(Example 1) Convergence of the eigenvalues and the eigenfunctions in $L^2$ and DG norms with respect to the plane wave cutoff $K$, where $p=2$ and $h=0.4/2^7$.}
\label{fig:ex1-k}
\end{figure}

\begin{figure}[h!]
\subfigure[]{\includegraphics[width=0.48\textwidth]{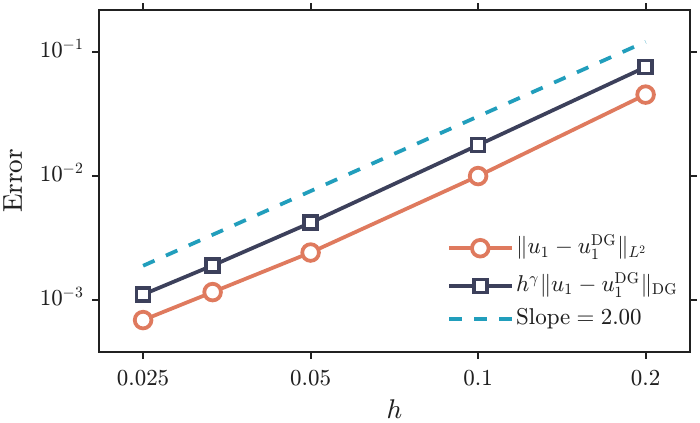}}
\subfigure[]{\includegraphics[width=0.48\textwidth]{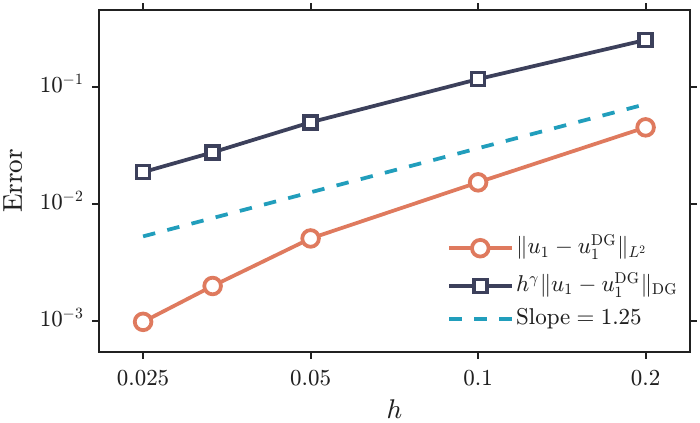}}
\caption{(Example 1) Convergence of $\|u_1-u_{1}^{\mathrm{DG}}\|_{L^2}$ and $h^\gamma\|u_1-u_1^{\mathrm{DG}}\|_{\mathrm{DG}}$ with respect to $h$, where $\gamma=(3-\alpha)/(2\alpha)$: (a) $\alpha=1.0$; (b) $\alpha=2.0$.}
\label{fig:ex1-scaled-errors}
\end{figure}

\begin{figure}[h!]
\subfigure[]{\includegraphics[width=0.48\textwidth]{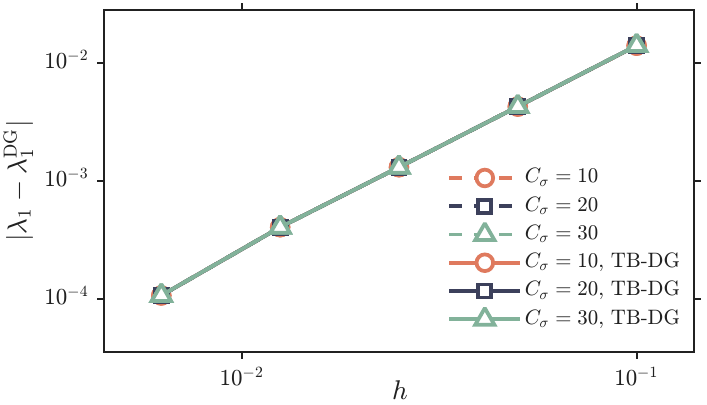}}
\subfigure[]{\includegraphics[width=0.48\textwidth]{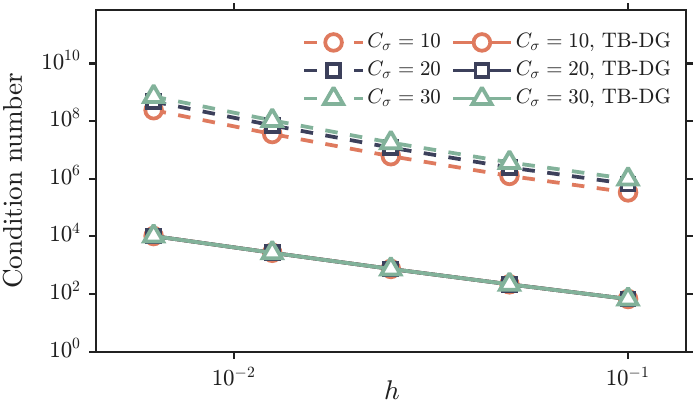}}
\caption{(Example 1) Effect of the penalty parameter $C_\sigma$ in the 
first eigenvalue error and condition number:
(a) convergence with and without the TB-DG 
preconditioner; 
(b) condition numbers of the unpreconditioned and TB-DG preconditioned 
systems.}
\label{fig:ex1-penalty}
\end{figure}

\vspace{0.5cm}

\noindent{\bf Example 2 (2D linear problem for a two-atom system).}
We next consider a two-atom system: find $\lambda\in\mathbb{R}$ and $u\in H^1_{\#}(\Omega)$ with $\|u\|_{L_\#^2(\Omega)}=1$ such that
\[
\left(-\frac{1}{2}\Delta +V_{1}+V_{2} \right)u = \lambda u,
\]
where $\Omega=[-2,2]^2$ and
\begin{equation*}
V_i(\vr) = -\frac{\operatorname{erfc}(\alpha |\vr - \vR_i|)}{|\vr - \vR_i|}
- \frac{2 \pi}{|\Omega|}\sum_{\substack{\vG\in\mathcal{R}^*\\ \vG \ne \bm 0}}
\frac{1}{|\vG|}\operatorname{erfc}\!\left(\frac{|\vG|}{2\alpha}\right)e^{\mathrm{i} \vG \cdot  (\vr - \vR_i)}
 + \frac{2\alpha}{\sqrt{\pi}}, \qquad i=1,2.
\end{equation*}
The atomic positions are $\vR_1=(-1,0)$ and $\vR_2=(1,0)$, and the inner 
domain is $\Omega_{\rm in}=[-1.2,-0.8]\times[-0.2,0.2]\cup
[0.8,1.2]\times[-0.2,0.2]$.

We first present the convergence with respect to the spline mesh size in
Figure~\ref{fig:ex2-eig-h}, and then display the convergence with respect to the
plane wave cutoff in Figure~\ref{fig:ex2-k}. It is observed that the errors exhibit algebraic convergence in $h$ and superalgebraic convergence in $K$, which agrees well with our theoretical results.
Figure~\ref{fig:ex2-eigenfunction-error} compares the first eigenfunctions and their absolute errors for the plane-wave, IGA, and IGA-PW discretizations. 
Although the three methods produce visually similar eigenfunctions, the error distributions indicate that the IGA-PW approximation is generally closer to the reference solution. 
In particular, the hybrid discretization provides a more accurate description of the nuclear cusp.

\begin{figure}[h!]
\subfigure[]{\includegraphics[width=0.48\textwidth]{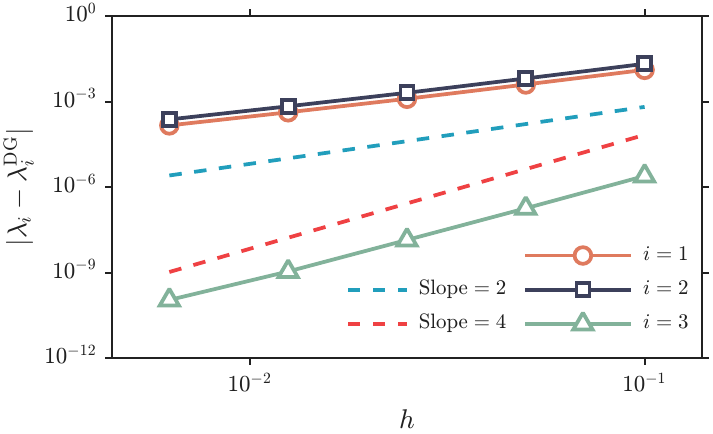}}
\subfigure[]{\includegraphics[width=0.48\textwidth]{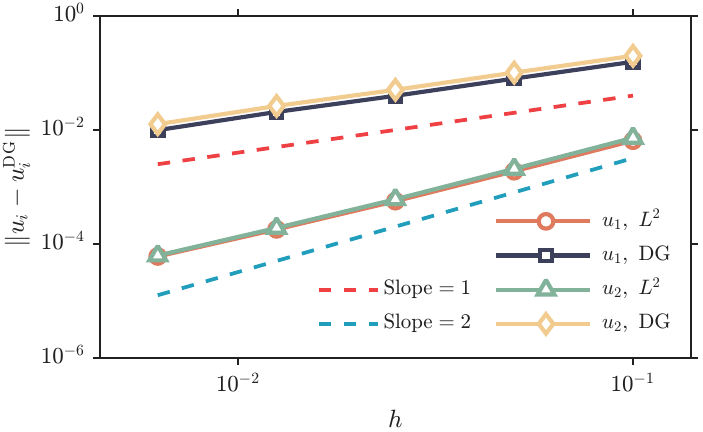}}
\caption{(Example 2) Convergence of the eigenvalues and eigenfunctions with respect to $h$, where $K=30$ and $p=2$.}
\label{fig:ex2-eig-h}
\end{figure}

\begin{figure}[h!]
\subfigure[]{\includegraphics[width=0.48\textwidth]{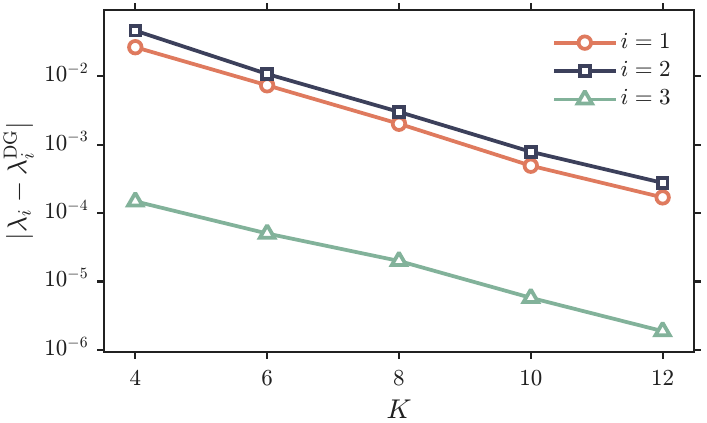}}
\subfigure[]{\includegraphics[width=0.48\textwidth]{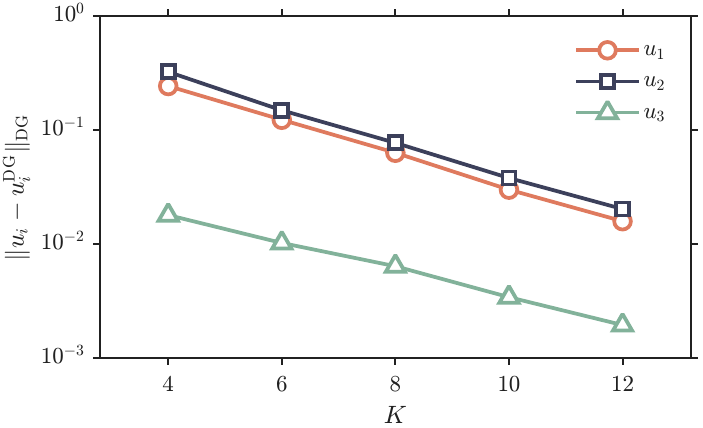}}
\caption{(Example 2) Convergence of the eigenvalues and the corresponding eigenfunctions in the DG norm with respect to the plane wave cutoff $K$, where $p=2$ and $h=0.4/2^7$.}
\label{fig:ex2-k}
\end{figure}

\begin{figure}[h!]
\centering
\setlength{\subfigcapskip}{10pt}

\subfigure[First eigenfunction obtained by plane wave, IGA, and IGA-PW.]{%
\begin{minipage}{0.98\textwidth}
\centering
\includegraphics[height=0.27\linewidth,keepaspectratio]{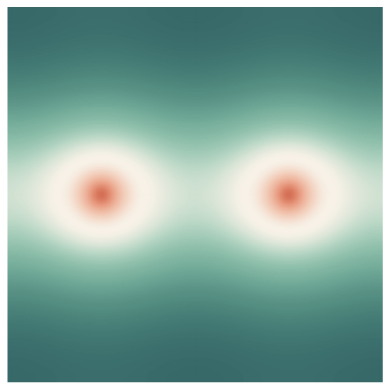}%
\hspace{0.05\linewidth}%
\includegraphics[height=0.27\linewidth,keepaspectratio]{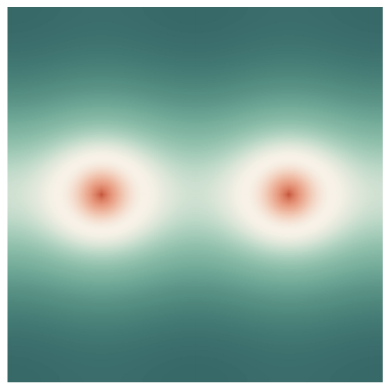}%
\hspace{0.05\linewidth}%
\includegraphics[height=0.27\linewidth,keepaspectratio]{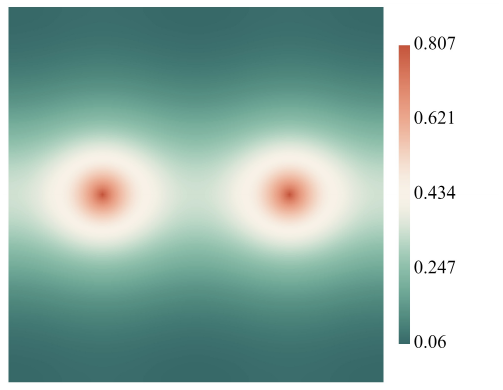}%
\end{minipage}}

\subfigure[Absolute error of the first eigenfunction for the plane wave, IGA, and 
IGA-PW discretizations, with $L^2$ errors $6.80\times10^{-3}$, $3.85\times10^{-3}$, 
and $1.61\times10^{-3}$, respectively.]{%
\begin{minipage}{0.98\textwidth}
\centering
\includegraphics[height=0.27\linewidth,keepaspectratio]{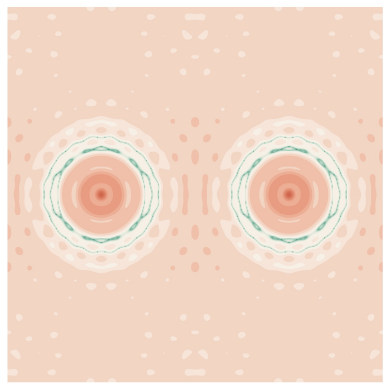}%
\hspace{0.05\linewidth}%
\includegraphics[height=0.27\linewidth,keepaspectratio]{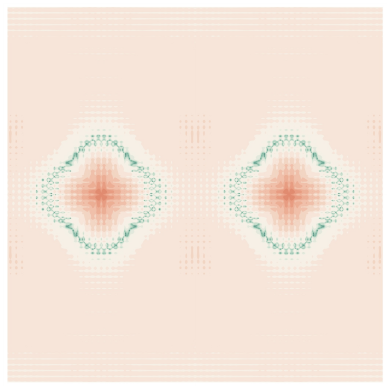}%
\hspace{0.05\linewidth}%
\includegraphics[height=0.27\linewidth,keepaspectratio]{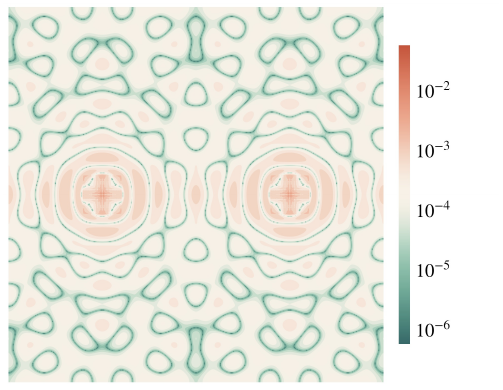}%
\end{minipage}}

\caption{(Example 2) The first eigenfunction and its absolute error for the plane wave, IGA, and IGA-PW discretizations, where the numbers of degrees of freedom are
$1257, 4096, 895$, respectively.}
\label{fig:ex2-eigenfunction-error}
\end{figure}

\vspace{0.5cm}

\noindent{\bf Example 3 (Modeling 2D Bose-Einstein condensates).}
Consider the following Gross-Pitaevskii equation arising in the modeling of Bose--Einstein condensates \cite{pitaevskii2003bose}: find $\lambda\in\mathbb{R}$ and $u\in H^1_{\#}(\Omega)$ with $\|u\|_{L_\#^2(\Omega)}=1$ such that
\[
\left(-\frac{1}{2}\Delta + V + \rho \right)u= \lambda u,
\]
where the electron density is defined as $\rho(\vr)=|u(\vr)|^2$. We use the same domain and external potential $V$ in \eqref{eq:ex1-potential}.

The nonlinear problem is usually solved by the self-consistent field (SCF) iteration. Given an input state $u^{(m)}_{\rm in}$ in the $m$-th iteration, we form the electron density $\rho^{(m)}_{\rm in}=|u^{(m)}_{\rm in}|^2$ and solve the linearized eigenvalue problem
\begin{equation*}
\left(-\frac{1}{2}\Delta+V+\rho^{(m)}_{\rm in}\right)u=\lambda u
\end{equation*}
to obtain an output eigenpair $(\lambda^{(m)}_{\rm out},u^{(m)}_{\rm out})$. 
The next input state is generated by a linear mixing step, $u^{(m+1)}_{\rm in}=\theta u^{(m)}_{\rm in}+(1-\theta)u^{(m)}_{\rm out}$ with $0<\theta<1$, and the iteration is repeated until the eigenvalue and density are converged.

Figure~\ref{fig:ex3-convergence} presents the convergence of the first eigenpair. 
The observed numerical results remain consistent with the theoretical predictions, although the analysis is established only for the corresponding linearized eigenvalue problem.
Figure~\ref{fig:ex3-error-surface} further illustrates the error distribution of the first eigenfunction. 
The scale of the \(z\)-axis shows that the IGA-PW method achieves smaller errors near the nuclei while preserving a smooth representation in the interstitial region.

\begin{figure}[h!]
  \subfigure[]
  {\includegraphics[width=0.32\textwidth]{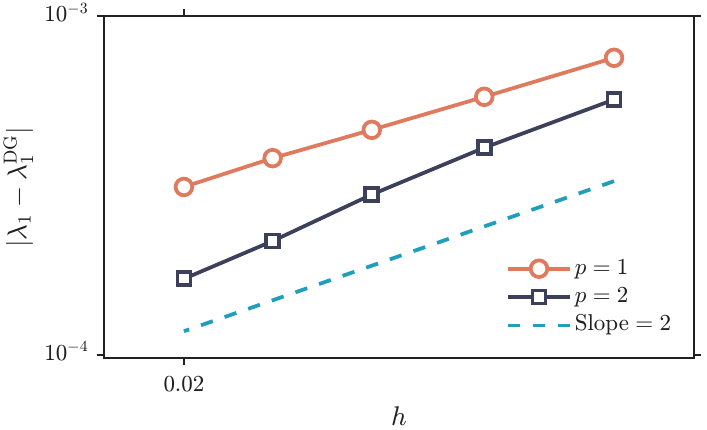}}
  \subfigure[]
  {\includegraphics[width=0.32\textwidth]{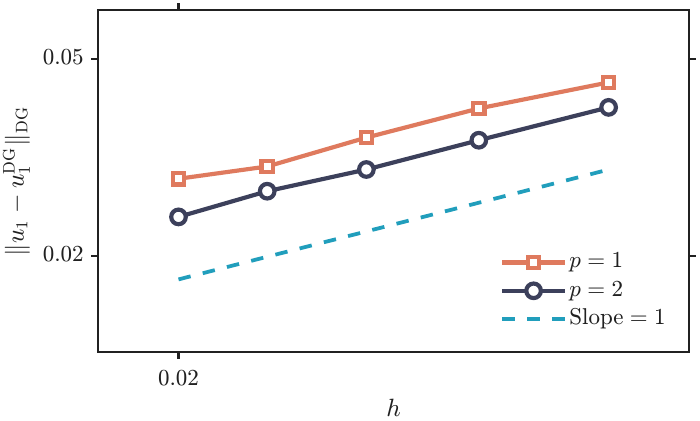}}
  \subfigure[]
  {\includegraphics[width=0.32\textwidth]{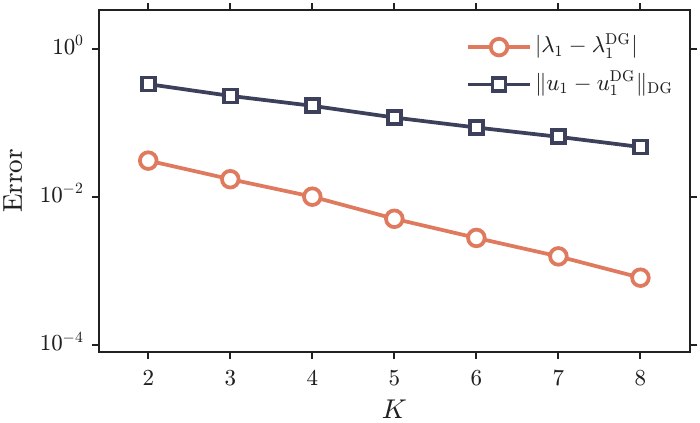}}
  \caption{(Example~3) (a) Convergence of the eigenvalue with respect to $h$ for $K=20$; (b) convergence of the eigenfunction in the DG norm with respect to $h$ for $K=20$; (c) convergence of the eigenvalue and eigenfunction DG error with respect to $K$ for $p=1$ and $h=0.4/2^6$.}
  \label{fig:ex3-convergence}
\end{figure}

\begin{figure}[h!]
\subfigure[Plane waves]{\includegraphics[width=0.32\textwidth]{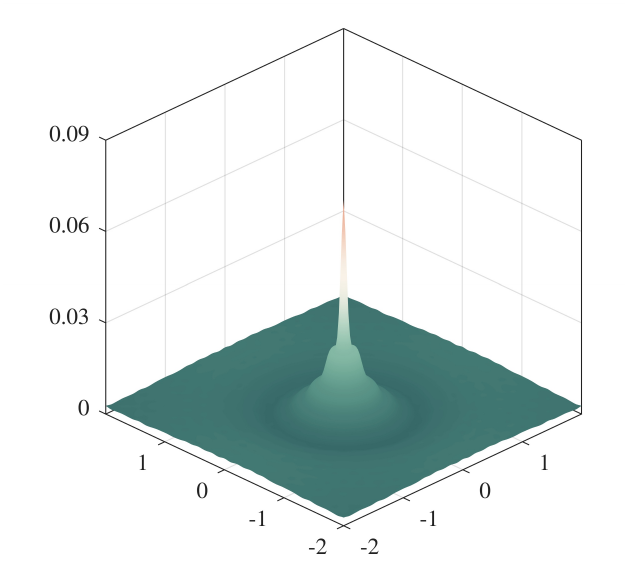}}
\subfigure[IGA]{\includegraphics[width=0.32\textwidth]{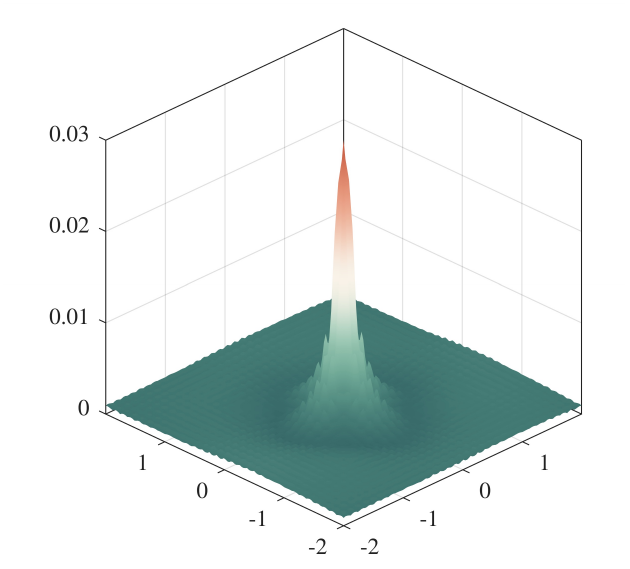}}
\subfigure[IGA-PW]{\includegraphics[width=0.32\textwidth]{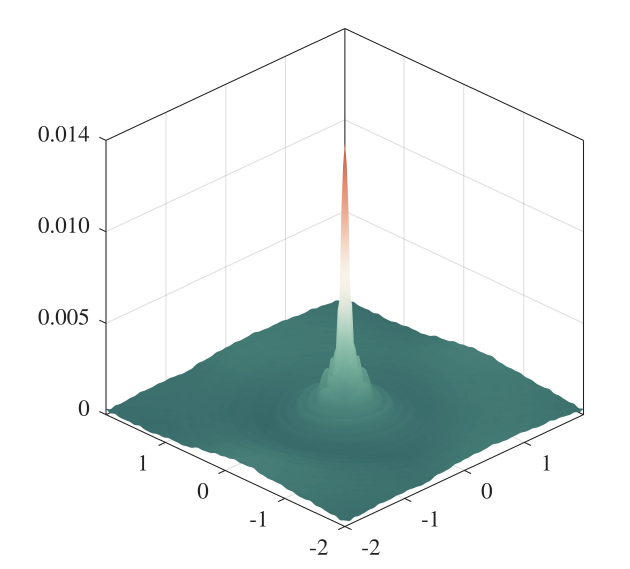}}
\caption{(Example 3) Absolute error in the eigenfunction for the plane wave, IGA, and IGA-PW discretizations, where the numbers of degrees of freedom are
$1257, 1024, 790$, respectively.}
\label{fig:ex3-error-surface}
\end{figure}

\vspace{0.5cm}

\noindent{\bf Example 4 (3D simulation for a periodic helium system).}
As a final three-dimensional nonlinear test, we consider a periodic Hartree model for a helium atom: find $\lambda\in\mathbb{R}$ and $u\in H^1_{\#}(\Omega)$ with $\|u\|_{L_\#^2(\Omega)}=1$ such that
\[
\left(-\frac{1}{2}\Delta + V_{\rm ext} + V_{\rm H} [\rho]  \right)u = \lambda u,
\]
where $\Omega=[-2,2]^3$ and $\rho(\vr)=2|u(\vr)|^2$. The external potential is
\begin{equation*}
V_{\rm ext}(\vr) =2\bigg( - \frac{\operatorname{erfc}(\alpha |\vr|)}{| \vr|}
 - \frac{4 \pi}{|\Omega|}\sum_{\substack{\vG\in\mathcal{R}^*\\ \vG \ne \bm 0}} \frac{1}{|\vG|^2} e^{-|\vG|^2/(4\alpha^2)} e^{\mathrm{i} \vG \cdot \vr}
 + \frac{2\alpha}{\sqrt{\pi}}\bigg).
\end{equation*}
The Hartree potential is denoted by

\begin{equation*}
V_{\rm H}[\rho](\vr)=\int_{\Omega}\frac{\rho(\vr')}{|\vr-\vr'|}\,\dd\vr'.
\end{equation*}
In the simulation, the inner domain is set as $\Omega_{\rm in}=[-0.2,0.2]^3$.

Figure~\ref{fig:ex4-conv} shows the convergence with respect to both \(h\) and the plane-wave cutoff \(K\) for the three-dimensional problem. 
Since \(u_1\in H^{5/2-\varepsilon}(\Omega)\) for any \(\varepsilon>0\) \cite{maday2019regularity}, the observed convergence rates are consistent with Theorem~\ref{them:convergence-rate}.
Furthermore, Figure~\ref{fig:ex4-fields} shows the helium ground state quantities computed by IGA-PW. 
The distributions of $|u|$, $\rho$, and $V_{\rm H}$ on the plane $z=0$ exhibit the expected radial structure, and the radial distributions agree well with the reference solutions. 

\begin{figure}[h!]
\subfigure[]{
\includegraphics[width=0.32\textwidth]{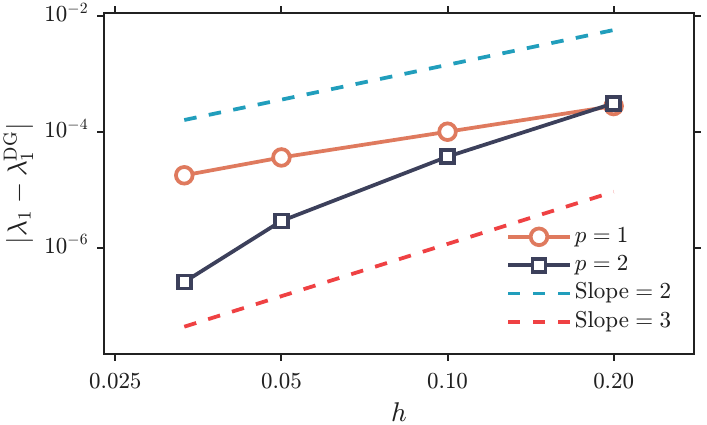}}
\subfigure[]{
\includegraphics[width=0.32\textwidth]{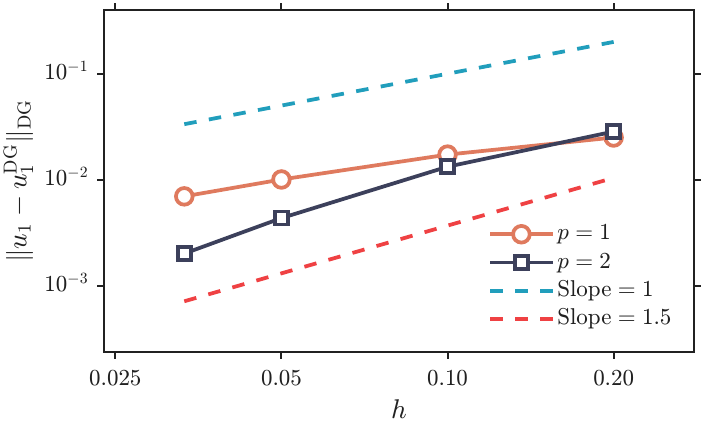}}
\subfigure[]{
\includegraphics[width=0.32\textwidth]{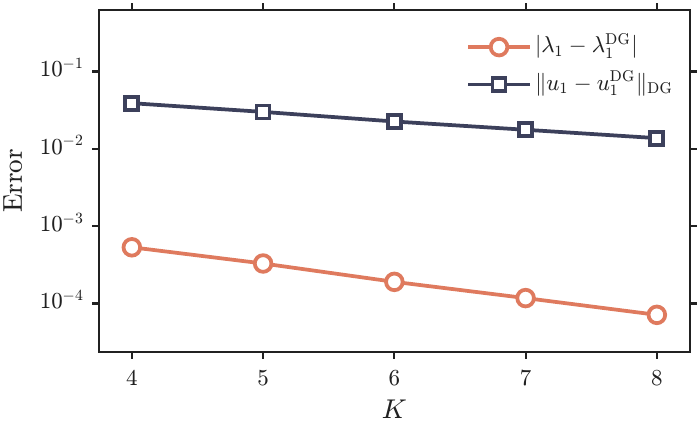}}
\caption{(Example 4) (a) Convergence of the eigenvalue with respect to $h$ for $K=20$; (b) convergence of the eigenfunction in the DG norm with respect to $h$ for $K=20$; (c) convergence of the eigenvalue and eigenfunction DG error with respect to $K$ for $p=1$ and $h=0.4/12$.}
\label{fig:ex4-conv}
\end{figure}

We finally examine the convergence of the ground-state energy. Since the absolute value of the periodic Hartree energy depends on the choice of the Ewald zero-mode convention and the additive constant in \(V_{\rm ext}\), we focus on the convergence toward a reference energy computed under the same convention.
The convergence in Figure~\ref{fig:ex4-energy} shows that the computed ground-state energy approaches the reference value monotonically as the discretization is refined, with the final error reaching nearly \(10^{-4}\). 
This behavior indicates that the IGA-PW discretization accurately captures both the singular near-nucleus structure and the smooth interstitial contribution.

\begin{figure}[h!]
\centering
\setlength{\subfigcapskip}{10pt}

\subfigure[$|u|$, $\rho$, and $V_{\rm H}$ on the plane $z=0$.]{%
\begin{minipage}{0.98\textwidth}
\centering
\includegraphics[height=0.22\linewidth,keepaspectratio]{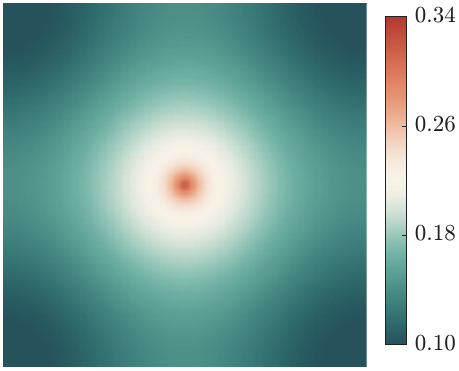}%
\hspace{0.05\linewidth}%
\includegraphics[height=0.22\linewidth,keepaspectratio]{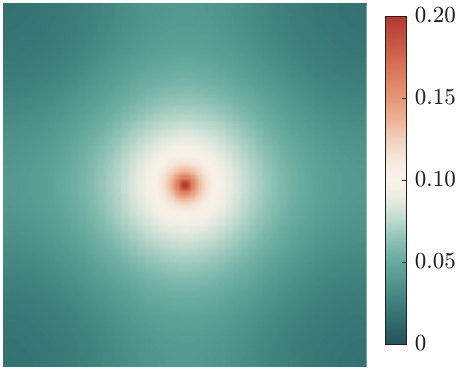}%
\hspace{0.05\linewidth}%
\includegraphics[height=0.22\linewidth,keepaspectratio]{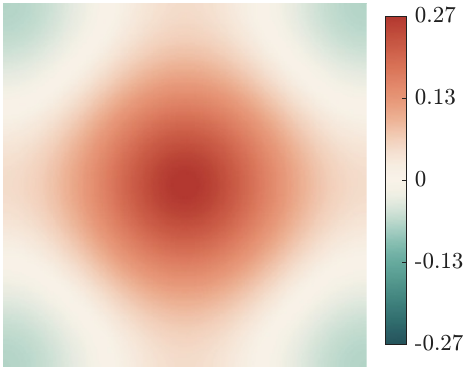}%
\end{minipage}}

\subfigure[Radial distributions compared with reference solutions.]{%
\begin{minipage}{0.98\textwidth}
\centering
\includegraphics[height=0.17\linewidth,keepaspectratio]{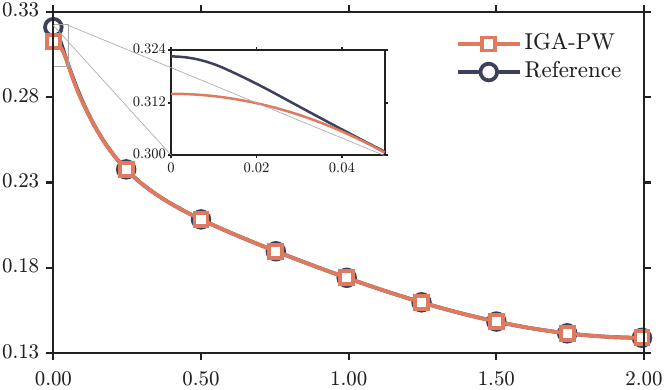}%
\hspace{0.05\linewidth}%
\includegraphics[height=0.17\linewidth,keepaspectratio]{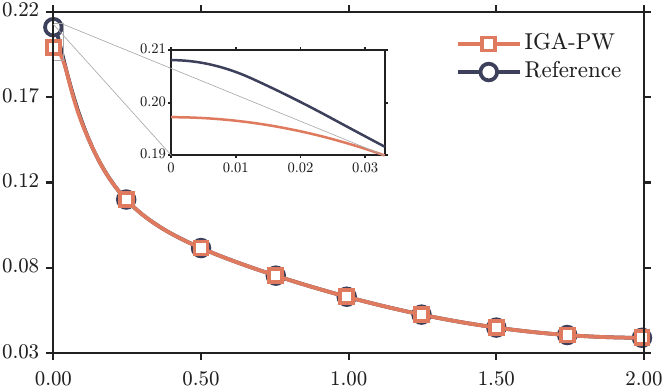}%
\hspace{0.05\linewidth}%
\includegraphics[height=0.17\linewidth,keepaspectratio]{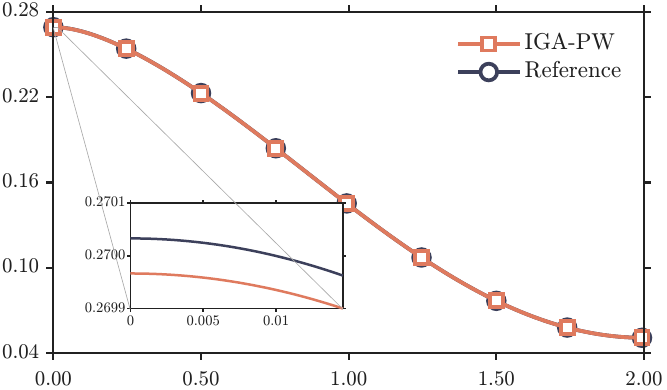}%
\end{minipage}}

\caption{(Example 4) Helium ground-state quantities computed by IGA-PW 
with $K=20$, $p=2$, and $h=0.4/2^3$.}
\label{fig:ex4-fields}
\end{figure}

\begin{figure}[h!]
\centering
\includegraphics[width=0.6\textwidth]{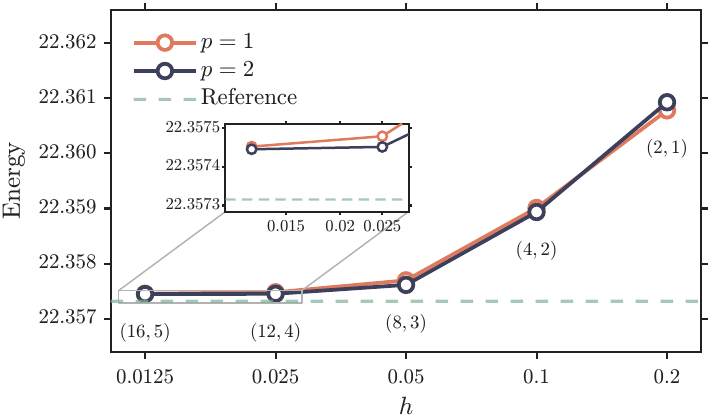}
\caption{Convergence of the shifted periodic energy computed by the IGA-PW method. The labels $(K,r)$ denote the plane wave cutoff $K$ and the mesh size $h=0.4/2^r$, respectively.}
\label{fig:ex4-energy}
\end{figure}

\section{Concluding remarks}
\label{sec:conclusions}

In this paper, we have developed an IGA-PW discretization for periodic full-potential eigenvalue problems arising in electronic structure calculations. 
Tensor-product B-splines are employed in localized atomic patches to resolve the cusp behavior induced by the Coulomb singularity, while plane waves are used in the smooth interstitial region. 
The two approximation spaces are coupled through a symmetric interior penalty DG formulation, providing a flexible weak interface treatment without enforcing strong continuity across patch boundaries.

Compared with global spline discretizations, the proposed hybrid strategy concentrates local resolution near the nuclei, where the reduced regularity is most pronounced, while retaining the efficiency of plane waves in the interstitial region. 
This leads to a more economical use of degrees of freedom while maintaining accurate resolution of the near-nuclear behavior. 
The {\it a priori} error estimates separate the local spline approximation error from the plane wave approximation error in the smooth exterior region, yielding algebraic convergence near the nuclei and superalgebraic convergence in the interstitial region. 
Numerical experiments further demonstrate the accuracy and efficiency of the proposed discretization.

The present DG coupling framework is flexible with respect to the choice of local approximation spaces. 
In particular, the spline discretization in the atomic patches may be replaced by finite element discretizations to allow unstructured local refinement, while the IGA framework can also accommodate more complex local region decompositions and geometries.
Another promising direction is the development of adaptive IGA techniques for Kohn-Sham equations within the present hybrid framework.

\section*{Acknowledgment}

B. Lai and X. Li were partially supported by the National Natural Science Foundation of China (No. 12301548).
X. Meng was partially supported by the National Natural Science Foundation of China (No. 12101057), Guangdong Higher Education Upgrading Plan (UIC-R0400024-21), and Guangdong and Hong Kong Universities ``1+1+1” Joint Research Collaboration Scheme (No. 2025A0505000014).

\appendix
\renewcommand\thesection{\appendixname~\Alph{section}}

\section{Proof of the error estimate}
\label{append:error_analysis}

\renewcommand{\theequation}{A.\arabic{equation}}
\renewcommand{\thelemma}{A.\arabic{lemma}}
\renewcommand{\thetheorem}{A.\arabic{theorem}}
\renewcommand{\theremark}{A.\arabic{remark}}
\renewcommand{\thefigure}{A.\arabic{figure}}

\setcounter{equation}{0}
\setcounter{figure}{0}
\setcounter{lemma}{0}
\setcounter{theorem}{0}
\setcounter{remark}{0}

\makeatletter
\providecommand{\theHequation}{}
\providecommand{\theHfigure}{}
\providecommand{\theHlemma}{}
\providecommand{\theHtheorem}{}
\providecommand{\theHremark}{}
\renewcommand{\theHequation}{appendix.A.\arabic{equation}}
\renewcommand{\theHfigure}{appendix.A.\arabic{figure}}
\renewcommand{\theHlemma}{appendix.A.\arabic{lemma}}
\renewcommand{\theHtheorem}{appendix.A.\arabic{theorem}}
\renewcommand{\theHremark}{appendix.A.\arabic{remark}}
\makeatother

This appendix is devoted to the proof of Theorem~\ref{them:convergence-rate}. 
The analysis follows the DG spectral approximation framework of \cite{li2019dg,li2025dgiga}, but the present IGA-PW setting involves the coupling of two different approximation spaces: local B-splines on the atomic patches and restricted periodic plane waves on the interstitial region.

The proof proceeds in three main steps: first derive the local approximation estimates for the restricted plane waves on \(\Omega_{\rm out}\) and the B-spline discretization on \(\Omega_{\rm in}\); then establish the DG error estimate for the associated source problem within the SIPG framework; and finally apply the abstract DG eigenvalue approximation theory to obtain the eigenfunction and eigenvalue estimates in Theorem~\ref{them:convergence-rate}.

Most parts of the analysis are technical and follow naturally from the frameworks developed in \cite{li2019dg,li2025dgiga}. Therefore, we omit many routine arguments and focus instead on the key intermediate results and the proof ingredients that are specific to the present IGA-PW coupling framework.

We first define the ``best" approximations of the function in the interstitial region and the atomic patch, respectively.
For the interstitial region, we define the projection
\[
P_K:L^2_{\#}(\Omega_{\rm out})\rightarrow X_K(\Omega_{\rm out})
\]
satisfying
\begin{eqnarray*}
\|u-P_K u\|_{H^1(\Omega_{\rm out})}
=
\inf_{U^{\rm out}_K\in X_K(\Omega_{\rm out})}
\|u-U^{\rm out}_K\|_{H^1(\Omega_{\rm out})}.
\end{eqnarray*}
For the atomic patch, we use the B-spline approximation estimate given in \eqref{eq:igaregularity}.

Let $\tilde{\Omega}$ be an inscribed ball of $\Omega_{\rm in}$ (see Figure \ref{fig:inscribed}).
We point out that the introduction of $\tilde{\Omega}$ is solely for numerical analysis and is irrelevant to the implementation of our DG scheme.
\begin{figure}[ht]
\centering
\begin{tikzpicture}
    \draw[black] (0,0) rectangle (6,4); 
    \draw[black] (2,1) rectangle (4,3); 
    \draw[black] (3,2) circle (1);
    \node at (3,2) {$\tilde{\Omega}$}; 
    \node (inlabel) at (4.8,1.1) {$\Omega_{\text{in}}$};
    \draw[->] (inlabel.west) -- (3.85,1.35);
    \node at (0.5,0.5) {$\Omega_{\text{out}}$}; 
\end{tikzpicture}
\caption{Illustration of the inscribed ball $\tilde{\Omega}$.}
\label{fig:inscribed}
\end{figure}
In order to show the projection error, we present the following smooth extension (SE) condition on functions in \(H^s(\Omega_{\rm out})\):

\vskip 0.2cm 
\noindent
{\bf (SE)}
For \(u\in H^s(\Omega_{\rm out})\), it can be smoothly extended to the domain \(\Omega_{\rm in}\setminus\tilde{\Omega}\), still denoted by \(u\), and there exists a constant \(C\) such that
\begin{equation*}
\|u\|_{H^s(\Omega\setminus\tilde{\Omega})}
\leq C \|u\|_{H^s(\Omega_{\rm out})}.
\end{equation*}

\begin{lemma}[{\bf Restricted plane wave approximation}]
\label{lemma:planewave}
If \(u\in H^s(\Omega_{\rm out})\) and satisfies the condition (SE), then for \(0\le t<s\), there exists a positive constant \(C\) such that
\begin{eqnarray}
\|u-P_K u\|_{H^t(\Omega_{\rm out})}\leq CK^{t-s}\|u\|_{H^s(\Omega_{\rm out})}.
\end{eqnarray}
\end{lemma}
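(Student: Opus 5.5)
The plan has three stages: a periodic extension of $u$ that reduces the $H^1$ case to the global estimate \eqref{eq:add-rate-planewave}, a duality argument plus interpolation for $0\le t<1$, and an inverse estimate for $1<t<s$. Throughout, $P_K$ is the $H^1(\Omega_{\rm out})$-orthogonal projection onto $X_K(\Omega_{\rm out})$, which is well defined since $X_K(\Omega_{\rm out})$ is finite dimensional.

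\textbf{Step 1: extension and the case $t=1$.} By (SE), $u$ extends to $\Omega\setminus\tilde\Omega$ with $\|u\|_{H^s(\Omega\setminus\tilde\Omega)}\le C\|u\|_{H^s(\Omega_{\rm out})}$. Fix a smooth periodic cutoff $\zeta$ with $\zeta\equiv1$ on $\Omega_{\rm out}$ and $\zeta\equiv0$ on a smaller concentric ball inside $\tilde\Omega$. Set $w:=\zeta u$, extended by zero, which gives $w\in H^s_{\#}(\Omega)$ with $\|w\|_{H^s_{\#}(\Omega)}\le C\|u\|_{H^s(\Omega_{\rm out})}$. (If (SE) only gives the extension up to $\partial\tilde\Omega$, I shrink the ball slightly; this is harmless.) Since $(\Pi_K w)|_{\Omega_{\rm out}}\in X_K(\Omega_{\rm out})$ and $w=u$ on $\Omega_{\rm out}$, the minimizing property of $P_K$ together with \eqref{eq:add-rate-planewave} gives, for every $0\le r\le s$,
\[
\|u-P_Ku\|_{H^1(\Omega_{\rm out})}\le\|w-\Pi_Kw\|_{H^1_{\#}(\Omega)}\le CK^{1-s}\|u\|_{H^s(\Omega_{\rm out})},
\qquad
\|u-\Pi_Kw\|_{H^r(\Omega_{\rm out})}\le CK^{r-s}\|u\|_{H^s(\Omega_{\rm out})}.
\]

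\textbf{Step 2: the case $0\le t<1$.} I would use an Aubin--Nitsche argument for $e:=u-P_Ku$. By Galerkin orthogonality, $(e,v)_{H^1(\Omega_{\rm out})}=0$ for all $v\in X_K(\Omega_{\rm out})$. Let $z$ solve the Neumann problem $-\Delta z+z=e$ on $\Omega_{\rm out}$ with periodic conditions on $\partial\Omega$ and $\partial_n z=0$ on $\Gamma$. Then
\[
\|e\|_{L^2(\Omega_{\rm out})}^2=(e,z-v)_{H^1(\Omega_{\rm out})}\le\|e\|_{H^1(\Omega_{\rm out})}\inf_{v\in X_K(\Omega_{\rm out})}\|z-v\|_{H^1(\Omega_{\rm out})}.
\]
To bound the infimum I apply Step 1 to $z$. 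The difficulty is that $\Omega_{\rm out}$ has reentrant corners (for $d=2$) or edges (for $d=3$) on $\Gamma$, so $z\in H^{1+\beta}(\Omega_{\rm out})$ only for some $\beta\in(0,1]$. I will use a stable Stein extension of $z$ in place of (SE) and take this $\beta$ from corner regularity. This yields the infimum bound $CK^{-\beta}\|e\|_{L^2(\Omega_{\rm out})}$, and hence $\|e\|_{L^2}\le CK^{-\beta}\|e\|_{H^1}$. Interpolating between $t=0$ and $t=1$ then covers $0<t<1$.

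This gives the full rate $K^{t-s}$ only when $\beta=1$. If $\beta<1$, I would instead write $e=(u-\Pi_Kw)+(\Pi_Kw-P_Ku)$. The first piece is controlled by Step 1. The second, $\psi:=\Pi_Kw-P_Ku$, lies in $X_K(\Omega_{\rm out})$ and satisfies $\|\psi\|_{H^1}\le CK^{1-s}\|u\|_{H^s(\Omega_{\rm out})}$, so it suffices to show $\|\psi\|_{H^t}\le\|\psi\|_{H^1}$ for $t\le1$. This is trivial: the required factor is $K^{t-1}\ge K^{1-s}\cdot K^{t-1}/K^{1-s}$, and the worst case is $t=0$, where $\|\psi\|_{L^2}\le\|\psi\|_{H^1}\le CK^{1-s}$. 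That is only $K^{1-s}$ rather than $K^{-s}$. Recovering the last factor $K^{-1}$ is exactly what duality provides, so I will rely on Step 2 with the best available $\beta$, and apply it to $s$ enlarged by one when $u$ is smoother.

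\textbf{Step 3: the case $1<t<s$ (main obstacle).} Again split $e=(u-\Pi_Kw)+\psi$ with $\psi\in X_K(\Omega_{\rm out})$. The first piece is fine by Step 1, so what is needed is an inverse estimate
\[
\|\psi\|_{H^t(\Omega_{\rm out})}\le CK^{t-1}\|\psi\|_{H^1(\Omega_{\rm out})}\qquad\text{for }\psi\in X_K(\Omega_{\rm out}).
\]
The global Bernstein inequality does not apply, because $\psi$ is known only on $\Omega_{\rm out}$ and its trigonometric coefficients are not controlled by its norm there. Near $\Gamma$, Markov-type inequalities threaten a loss of $K^{2}$ per derivative rather than $K$.

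My first attempt is to prove a Bernstein inequality localized to $\Omega_{\rm out}$. Since $\Omega_{\rm out}$ is the complement of a cube, each point of $\Omega_{\rm out}$ lies in a segment of fixed length inside $\Omega_{\rm out}$ in every coordinate direction. I would combine one-dimensional Bernstein--Markov estimates for trigonometric polynomials of degree $K$ on arcs of fixed length with tensorization. If only the Markov loss $K^{2(t-1)}$ is obtainable, the argument still closes for $s$ large by absorbing the loss into $K^{1-s}$ (taking $s$ larger in Step 1 under (SE)). However, this gives the stated exponent $t-s$ exactly only once a Bernstein-type, rather than Markov-type, inverse bound is established. That is where I expect the real work.
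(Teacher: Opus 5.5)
As written, your proposal establishes the lemma only for $t=1$. Steps~2--3, which aim at $K^{t-s}$ for the $H^1(\Omega_{\rm out})$-best approximation $P_K$ when $t\neq1$, do not close, and as designed they cannot. The duality argument is capped by the Neumann corner/edge singularities at the reentrant angle $3\pi/2$ along $\Gamma$: $z\in H^{1+\beta}$ only for $\beta<2/3$, which gives $K^{1-\beta-s}$ in $L^2$. For $t>1$, a Bernstein inequality on $X_K(\Omega_{\rm out})$ cannot be expected. Restricted trigonometric polynomials behave like algebraic polynomials near $\Gamma$, where the Markov factor $K^2$ is already sharp in one dimension. Both of your fallbacks replace $s$ by a larger Sobolev index. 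That changes the hypothesis rather than proving the stated bound.

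The missing observation is that none of this is needed, and the paper does not attempt it. Its proof is essentially your Step~1. It uses (SE) to pass to $\Omega\setminus\tilde\Omega$, whose boundary is a sphere, and applies Proposition~3.1 of Li--Chen there (an extension to $H^s_{\#}(\Omega)$ plus \eqref{eq:add-rate-planewave}). It then restricts back to $\Omega_{\rm out}$. The approximant in that chain is attached to the larger domain, so transferring the bound to $P_K$ is justified only at $t=1$, by minimality.

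Downstream, Lemma~\ref{thm:T-approximate} uses the estimate only through an infimum over $\mathcal S^K_h(\Omega)$. There, bulk and trace terms (cf.\ the $K^{1/2-s}/\sqrt h$ penalty contribution) must be controlled for one and the same interstitial approximant. Your $(\Pi_K w)|_{\Omega_{\rm out}}$ already gives $\|u-\Pi_Kw\|_{H^t(\Omega_{\rm out})}\le CK^{t-s}\|u\|_{H^s(\Omega_{\rm out})}$ for all $0\le t\le s$ at once. So stop after Step~1: prove the $t=1$ bound for $P_K$ and the all-$t$ bound for this approximant, reading $P_Ku$ as this approximant when $t\neq1$.

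Step~1 itself needs one repair. $\tilde\Omega$ is the inscribed ball, so $\partial\tilde\Omega$ touches $\Gamma$ at the face centres. A smooth $\zeta\equiv1$ on $\Omega_{\rm out}$ can therefore vanish only on a strictly smaller ball $B'$. Then $\zeta u$ requires $u$ on $\tilde\Omega\setminus B'$, which (SE) does not provide, and ``shrinking the ball'' strengthens the hypothesis. Use a Stein extension across the smooth sphere $\partial\tilde\Omega$ instead. Alternatively, extend directly across the Lipschitz interface $\Gamma$, as you already do for $z$ in Step~2; for a cubic patch this makes (SE) automatic.
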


\begin{proof}
It follows from \cite[Proposition~3.1]{li2019dg} and condition~(SE) that
\[
\|u-P_K u\|_{H^t(\Omega_{\rm out})}
\leq
\|u-P_K u\|_{H^t(\Omega\setminus\tilde{\Omega})}
\leq CK^{t-s} \|u\|_{H^s(\Omega\setminus\tilde{\Omega})}
\leq CK^{t-s} \|u\|_{H^s(\Omega_{\rm out})}.
\]
\end{proof}

\begin{remark}[{\bf Smooth extension condition}]
We note that general functions \(u\in H^s(\Omega_{\rm out})\) do not necessarily satisfy the condition (SE), as the interface \(\Gamma\) may have irregular corners.
However, this projection error estimate is ultimately used to derive the convergence rate for the eigenvalue problem, where the eigenfunction is sufficiently smooth near the artificial interface \(\Gamma\). Therefore, the condition (SE) does not affect the validity of our main results.
\end{remark}

We then give an error estimate for the DG
approximation of the corresponding source problem.
Define the solution operators 
\begin{eqnarray*}
T:L_{\#}^2(\Omega)\rightarrow H_{\#}^1(\Omega) \qquad \text{such that} \quad a(Tf,v)=(f,v)\quad\forall~v\in H_{\#}^1(\Omega),
\end{eqnarray*}
and
\begin{eqnarray*}
T^{\rm DG}:L_{\#}^2(\Omega)\rightarrow \mathcal{S}^K_{h}(\Omega)
\qquad \text{such that} \quad a^{\rm DG}(T^{\rm DG}f,v)=(f,v)\quad\forall~v\in \mathcal{S}^K_{h}(\Omega).
\end{eqnarray*}
The following lemma gives the error bound in the DG norm.

\begin{lemma}[{\bf Error estimate in DG norm}]
\label{thm:T-approximate}
Let \(C_{\sigma}\) be sufficiently large and $h=O(K^{-1})$. 
If \(Tf\in H^{s_{\rm in}}(\Omega_{\rm in})\oplus H^{s_{\rm out}}(\Omega_{\rm out})\) and \(Tf|_{\Omega_{\rm out}}\) satisfies the condition (SE)
for \(f\in L^2_{\#}(\Omega)\),
then there exists a positive constant \(C\), independent of \(h\) and \(K\), such that
\begin{eqnarray}
\label{rate-T-tildeT}
\|(T-T^{\rm DG})f\|_{\rm DG}
\leq C h^{k-1} \|Tf\|_{H^{k}(\Omega_{\rm in})} + C \left( K^{1-s_{\rm out}} + \frac{K^{1/2-s_{\rm out}}}{\sqrt{h}} \right) \|Tf\|_{H^{s_{\rm out}}(\Omega_{\rm out})}
\end{eqnarray}
with \(k=\min\{p+1,s_{\rm in}\}\).
\end{lemma}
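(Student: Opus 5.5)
We follow the standard SIPG a priori argument: coercivity of $a^{\rm DG}$ on the discrete space, consistency of the scheme, and a Céa-type bound that reduces the error to interpolation errors in an augmented DG norm. Set $u=Tf$ and $u^{\rm DG}=T^{\rm DG}f$.

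\textbf{Stability and consistency.} First we establish discrete trace/inverse inequalities on $\Gamma$. For the spline part, the standard inverse trace inequality gives $\|\nabla v\|_{L^2(\Gamma)}\le Ch^{-1/2}\|\nabla v\|_{L^2(\Omega_{\rm in})}$ for $v\in V_h(\Omega_{\rm in})$, by mapping face-adjacent elements to the reference element. For the plane wave part we need $\|\nabla v\|_{L^2(\Gamma)}\le CK^{1/2}\|\nabla v\|_{L^2(\Omega_{\rm out})}$ for $v\in X_K(\Omega_{\rm out})$. This holds as in \cite{li2019dg}, e.g.\ by a Bernstein-type inequality for trigonometric polynomials on a strip of width $O(K^{-1})$ adjacent to $\Gamma$. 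With $\sigma=C_\sigma(K+h^{-1})$ and $C_\sigma$ large, Young's inequality then yields
\[
\mathrm{Re}\,a^{\rm DG}(v,v)+\mu\|v\|_{L^2}^2\ge c\|v\|_{\rm DG}^2
\]
on $\mathcal S_h^K$, where $\mu$ is a shift absorbing the potential $V$ (treated by a Gårding argument, or by assuming $a$ is shifted positive). Next, since $u\in H^1_\#$ has $[u]=0$, and since $u$ is smooth near $\Gamma$ by the (SE)/regularity assumptions so its normal flux is continuous, integration by parts gives Galerkin orthogonality: $a^{\rm DG}(u-u^{\rm DG},v)=0$ for all $v\in\mathcal S_h^K$.

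\textbf{Continuity in an augmented norm.} Define
\[
\|w\|_{*}^2=\|w\|_{\rm DG}^2+\sigma^{-1}\|\{\nabla w\}\|_{L^2(\Gamma)}^2 .
\]
Then $|a^{\rm DG}(w,v)|\le C\|w\|_*\|v\|_{\rm DG}$ for $w\in\widetilde H_\#+\mathcal S_h^K$ and $v\in\mathcal S_h^K$. Combining this with coercivity and orthogonality gives, for any $w_h\in\mathcal S_h^K$,
\[
\|u-u^{\rm DG}\|_{\rm DG}\le C\|u-w_h\|_* .
\]
The $L^2$ shift term is handled by the usual duality/Schatz argument, or is absent if $\mu=0$.

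\textbf{Interpolation.} Choose $w_h=\Pi_h u$ on $\Omega_{\rm in}$ and $w_h=P_Ku$ on $\Omega_{\rm out}$, and write $e=u-w_h$.
\begin{itemize}
\item The broken $H^1$ terms follow directly from \eqref{eq:igaregularity} and Lemma~\ref{lemma:planewave} (with $t=1$), giving $h^{k-1}\|u\|_{H^k(\Omega_{\rm in})}+K^{1-s_{\rm out}}\|u\|_{H^{s_{\rm out}}(\Omega_{\rm out})}$.
\item For the jump and flux terms we use the continuous multiplicative trace inequality $\|e\|_{L^2(\Gamma)}^2\le C\|e\|_{L^2}\|e\|_{H^1}$ on each side. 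On the spline side this gives $\sigma\|e^+\|_{L^2(\Gamma)}^2\lesssim(K+h^{-1})h^{2k-1}\lesssim h^{2k-2}$, using $K\lesssim h^{-1}$. On the plane wave side it gives $\sigma\|e^-\|^2_{L^2(\Gamma)}\lesssim(K+h^{-1})K^{1-2s}$. The $K$ part contributes $K^{2-2s}$, and the $h^{-1}$ part contributes $h^{-1}K^{1-2s}$, i.e.\ the term $K^{1/2-s}/\sqrt h$ after taking square roots.
\item The flux terms $\sigma^{-1}\|\nabla e\|_{L^2(\Gamma)}^2$ are bounded analogously, now applying the trace inequality to $\nabla e$ with $H^2$ bounds (Lemma~\ref{lemma:planewave} with $t=2$; \eqref{eq:igaregularity} with $r=2$). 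They are dominated by the same quantities, since $\sigma^{-1}\le\min(K^{-1},h)/C_\sigma$.
\end{itemize}

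\textbf{Main obstacle.} The delicate step is the discrete trace inequality for restricted plane waves on $\Gamma$. Unlike the spline case, it is not element-local: it requires a global argument for trigonometric polynomials near the corners of the square interface. We would cite or adapt \cite[Lemma 3.x]{li2019dg}; this inequality is exactly what dictates the $K$-part of $\sigma$. A secondary technical issue is ensuring that when $s_{\rm in}<2$ the spline-side flux term still makes sense; we would handle this with a weaker interpolation estimate or the lifting argument of \cite{li2025dgiga}.
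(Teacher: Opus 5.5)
Your route (SIPG coercivity, consistency, continuity in the augmented norm $\|\cdot\|_*$, then interpolation by $\Pi_h u$ and $P_K u$) is the standard framework the paper appeals to. The paper itself gives no proof of this lemma and defers to \cite{li2019dg,li2025dgiga}. Your bookkeeping of the jump and flux terms does reproduce exactly the three contributions $h^{k-1}$, $K^{1-s_{\rm out}}$ and $K^{1/2-s_{\rm out}}h^{-1/2}$.

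The genuine gap is the step you single out, and it is not a corner issue. The bound $\|\nabla v\|_{L^2(\Gamma)}\le CK^{1/2}\|\nabla v\|_{L^2(\Omega_{\rm out})}$ on $X_K(\Omega_{\rm out})$ is false. Bernstein's inequality holds on the whole torus. On the strip $\{\mathrm{dist}(\cdot,\Gamma)<K^{-1}\}\cap\Omega_{\rm out}$ it holds only after enlarging the strip by $O(K^{-1})$, i.e.\ into $\Omega_{\rm in}$, where the DG norm does not control the plane-wave part. In the normal direction, trigonometric polynomials restricted to a proper arc behave like algebraic polynomials of degree $\sim K$ and obey only Markov-type bounds.

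Concretely, take $v=T(x)g(y)$ with the following choices.
\begin{itemize}
\item $g(y)=\cos^{2M}(\pi y/L)$ with $M\approx K/2$; it is exponentially small for $|y|\ge R_{\rm in}$.
\item $T$ of degree $n=\epsilon K$ is the endpoint-peaked (Christoffel-type) kernel of the arc $[R_{\rm in},L-R_{\rm in}]$ at $x=R_{\rm in}$, normalized in $L^2$ of that arc.
\end{itemize}
Then $|T'(R_{\rm in})|\sim n^3$ and $\|T'\|\lesssim n^2$ on the arc. The growth of $T$ on $(-R_{\rm in},R_{\rm in})$ is at most $e^{Cn}$, and $g$ suppresses it once $\epsilon$ is small. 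This yields $\|\nabla v\|^2_{L^2(\Gamma)}\gtrsim \epsilon^2K^2\|\nabla v\|^2_{L^2(\Omega_{\rm out})}$, so the sharp constant is of order $K$, not $K^{1/2}$.

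Consequently both your coercivity step and your continuity bound require $\sigma\gtrsim K^2+h^{-1}$; the continuity bound needs it because it uses the same inverse estimate for the discrete argument $v$. Under the hypothesis $h=O(K^{-1})$ this fails in the regime $h\sim K^{-1}$, where $\sigma=C_\sigma(K+h^{-1})\sim K$. There, let the spline side create, through a boundary layer of width $h$, a jump proportional to $\partial_n v^-$ on that face. This makes $a^{\rm DG}(v,v)<0$ for large $K$ and any fixed $C_\sigma$.

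So no citation can supply the inequality in the form you use. The paper's text does not address this point either, since its choice of $\sigma$ tacitly assumes the $K^{1/2}$ scaling. There are two ways to close the argument.
\begin{itemize}
\item Additionally assume $h\lesssim K^{-2}$. Then $\sigma\ge C_\sigma h^{-1}\gtrsim K^2$, and all your interpolation estimates go through unchanged.
\item Take $\sigma=C_\sigma(K^2+h^{-1})$. This costs an extra factor $Kh^{1/2}$ in the spline term and $K^{1/2}$ in the plane-wave term.
\end{itemize}
A minor point: for $p=1$ the splines are not in $H^2(\Omega_{\rm in})$. The spline flux term must therefore be bounded with elementwise trace inequalities and broken $H^2$ norms, not with \eqref{eq:igaregularity} at $r=2$.
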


Using standard duality techniques, we can also derive the following lower-norm estimate.

\begin{lemma}[{\bf Error estimate in \(L^2\) norm}]
\label{thm:error_L2}
Under the conditions in Lemma \ref{thm:T-approximate} and if $h\sim K^{-\alpha}$ with some $1\leq\alpha<3$, there exists a positive constant $C$ such that
\begin{eqnarray}
\|(T-T^{\rm DG})f\|_{L^2(\Omega)} \leq C h^{\frac{3-\alpha}{2\alpha}} \|(T-T^{\rm DG})f\|_{\rm DG}.
\end{eqnarray}
\end{lemma}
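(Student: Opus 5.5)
We prove the bound by an Aubin--Nitsche duality argument, adapted to the fact that the discrete space $\mathcal S_h^K(\Omega)$ mixes two resolutions and that the SIPG penalty scales like $\sigma\sim K+h^{-1}$.

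\textbf{Setup.} Set $e:=(T-T^{\rm DG})f$ and let $w:=Te\in H^1_{\#}(\Omega)$ solve the dual problem $a(v,w)=(v,e)$. Since $a$ is Hermitian, the dual problem is of the same type as the primal one, so $w$ has the same regularity structure as $Tf$. Near the nucleus, $w\in H^{2}(\Omega_{\rm in}\setminus B_\varepsilon)$ holds at least away from the singularity. Globally, $-\tfrac12\Delta w=e-Vw$ with $V\in L^2$, so we expect $\|w\|_{H^{3/2}(\Omega)}\le C\|e\|_{L^2}$. The half-integer exponent is what produces the non-integer power in the lemma.

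\textbf{Galerkin orthogonality.} The SIPG form is consistent: for $w\in H^1_{\#}$ with $\Delta w\in L^2$ and $[w]=0$, $\{\nabla w\}$ is single-valued on $\Gamma$. Integrating by parts piecewise then gives
\[
\|e\|_{L^2}^2=a^{\rm DG}(e,w)=a^{\rm DG}(e,w-w_I)
\]
for any $w_I\in\mathcal S_h^K$, using Galerkin orthogonality $a^{\rm DG}(e,v)=0$ for $v\in\mathcal S_h^K$. Continuity of $a^{\rm DG}$ in the DG norm, augmented by the term $\sigma^{-1/2}\|\{\nabla\cdot\}\|_{L^2(\Gamma)}$, then yields
\[
\|e\|_{L^2}^2\le C\|e\|_{\rm DG}\Big(\|w-w_I\|_{\rm DG}+\sigma^{-1/2}\|\{\nabla(w-w_I)\}\|_{L^2(\Gamma)}\Big).
\]

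\textbf{Choice of interpolant.} We take $w_I=\Pi_h w$ on $\Omega_{\rm in}$ and $w_I=P_K w$ on $\Omega_{\rm out}$, and estimate each piece.
\begin{itemize}
\item On $\Omega_{\rm in}$, \eqref{eq:igaregularity} with $t=3/2$ gives $H^1$ error $Ch^{1/2}\|w\|_{H^{3/2}}$.
\item On $\Omega_{\rm out}$, Lemma~\ref{lemma:planewave} with $s=3/2$ gives $CK^{-1/2}\|w\|_{H^{3/2}}$. This requires the (SE) condition for $w$, which we accept as in the Remark, since $w$ is $H^{3/2}$ globally and its extension is just itself.
\item The jump term $\sigma^{1/2}\|[w-w_I]\|_{L^2(\Gamma)}$ is bounded by multiplicative trace inequalities. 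On the spline side it gives $\sigma^{1/2}h^{1}\|w\|_{H^{3/2}}$. On the plane wave side it gives $\sigma^{1/2}K^{-1}\|w\|_{H^{3/2}}$.
\item The flux term $\sigma^{-1/2}\|\nabla(w-w_I)\|_{L^2(\Gamma)}$ is borderline at $H^{3/2}$. We handle it with a slightly higher local regularity near $\Gamma$, since $w$ is smooth away from the nucleus, or absorb it into the same powers.
\end{itemize}

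\textbf{Balancing and the main obstacle.} With $h\sim K^{-\alpha}$ we have $\sigma\sim h^{-1}$ (as $\alpha\ge1$) and $K^{-1}=h^{1/\alpha}$. The dominant contributions are then the plane-wave $H^1$ error $K^{-1/2}=h^{1/(2\alpha)}$ and the plane-wave jump term $\sigma^{1/2}K^{-1}=h^{-1/2+1/\alpha}=h^{(2-\alpha)/(2\alpha)}$. Neither matches $h^{(3-\alpha)/(2\alpha)}$ exactly. To reach the target exponent $(3-\alpha)/(2\alpha)=\tfrac{3}{2\alpha}-\tfrac12$, we will exploit the interstitial smoothness of $w$ near $\Gamma$, where $w$ is in $H^{2}$ locally. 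The plane wave jump then becomes $\sigma^{1/2}K^{-3/2}=h^{-1/2}h^{3/(2\alpha)}$, which is exactly the target, and the in-patch terms $h^{1/2}$ are no worse since $\tfrac{3}{2\alpha}-\tfrac12\le1$ holds only when $\alpha\ge1$, while $h^{1/2}\le h^{(3-\alpha)/(2\alpha)}$ requires $\alpha\ge\tfrac32$.

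The main obstacle is this bookkeeping: we must determine which dual-regularity assumption near $\Gamma$ versus near the nucleus makes the in-patch contribution at most $h^{(3-\alpha)/(2\alpha)}$ in the range $1\le\alpha<3$. The condition $\alpha<3$ is precisely what keeps the exponent positive. We expect to use $H^2$ regularity of $w$ on $\Omega_{\rm in}$, which holds in 2D and 3D for Coulomb potentials since $V\in L^2$ gives $\Delta w\in L^2$. That yields $h$ there, which dominates the target, and leaves the plane-wave jump as the binding term. Dividing by $\|e\|_{L^2}$, using $\|w\|\le C\|e\|_{L^2}$, gives the claim.
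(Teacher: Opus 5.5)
Your route is the one the paper intends (the appendix offers nothing beyond an appeal to standard duality), and once you commit to $H^2$ dual regularity your accounting reproduces the stated exponent. The step that does not go through as written is the continuity bound. The form $a^{\rm DG}(e,w-w_I)$ contains $-\frac12\int_\Gamma\{\nabla e\}\cdot[\overline{w-w_I}]\,\dd s$, so bounding it requires $\sigma^{-1/2}\|\{\nabla e\}\|_{L^2(\Gamma)}$. The augmented continuity estimate needs the augmented norm on \emph{both} arguments, and this quantity is not controlled by $\|e\|_{\rm DG}$, because $e$ contains the non-discrete $Tf$, to which no inverse inequality applies.

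The standard repair is to split $e=(Tf-v_I)+(v_I-T^{\rm DG}f)$ with $v_I\in\mathcal S_h^K(\Omega)$. You then apply to the discrete piece the inverse trace inequalities that underlie coercivity with $\sigma=C_\sigma(K+h^{-1})$. This gives
\[
\|e\|_{L^2(\Omega)}\le C h^{\frac{3-\alpha}{2\alpha}}\Bigl(\|e\|_{\rm DG}+\|Tf-v_I\|_{\rm DG}+\sigma^{-1/2}\|\{\nabla(Tf-v_I)\}\|_{L^2(\Gamma)}\Bigr),
\]
and the extra terms obey bounds of the same form as in Lemma~\ref{thm:T-approximate}. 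That is enough for Theorem~\ref{them:convergence-rate}, but it is not literally the inequality with $\|e\|_{\rm DG}$ alone. The paper's one-line argument does not address this point either. You should therefore either state the lemma with the augmented norm of $e$, or carry the extra approximation term.

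Separately, fix the bookkeeping narrative.
\begin{itemize}
\item The non-integer power does not come from half-integer regularity. It comes from inserting $k=2$ and $s_{\rm out}=2$ into the approximation part of Lemma~\ref{thm:T-approximate} for $w=Te$, which gives $\|w-w_I\|_{\rm DG}\lesssim\bigl(h+K^{-1}+h^{-1/2}K^{-3/2}\bigr)\|e\|_{L^2}$. For $1\le\alpha<3$ the last (jump) term, $h^{(3-\alpha)/(2\alpha)}$, is the slowest, and $\alpha<3$ is what keeps it decaying.
\item $H^2$ near $\Gamma$ is not extra interstitial smoothness that you may choose to exploit; it is a ceiling. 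The only control on the dual datum is $\|e\|_{L^2}$, and $e$ jumps across $\Gamma$. Elliptic regularity therefore bounds $\|w\|_{H^2}$ in terms of $\|e\|_{L^2}$ and nothing stronger. This is exactly why the superalgebraic $K$-rate of Lemma~\ref{thm:T-approximate} degenerates to a fixed power here.
\item $V\in L^2$ does not by itself give $Vw\in L^2$; you need a bootstrap or Hardy's inequality. For a genuine two-dimensional Coulomb singularity, $V\notin L^2$ and $w\notin H^2$ near the nucleus. Your $H^2$ bound on $\Omega_{\rm in}$ therefore rests on the paper's standing assumption $V\in L^2_{\#}(\Omega)$, not on Coulomb behavior.
\end{itemize}
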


Note that the eigenvalue problems \eqref{model-eq} and \eqref{eq-eigen-DG} are equivalent to \(\lambda Tu=u\)
and \(\lambda^{\rm DG}T^{\rm DG}u^{\rm DG}=u^{\rm DG}\), respectively.
The following lemma relates the error of the eigenvalue problem to that of the corresponding source problem.

\begin{lemma}[{\bf Convergence of eigenvalue problem}]
\label{thm:eigen-source-bound}
Let nonzero $\lambda_i$ be the $i$-th eigenvalue of \eqref{model-eq} and $(\lambda_i^{\rm DG}, u_i^{\rm DG})$ be the $i$-th eigenpair of \eqref{eq-eigen-DG}. 
Then $\lambda_i^{\rm DG}\rightarrow \lambda_i$ as $K\rightarrow +\infty$ and $h=O(K^{-1})$, and there exists $u_i\in M(\lambda_i)$ with $\|u_i\|_{L_{\#}^2(\Omega)}=1$ such that  
\begin{align}
\label{source-eigenvalue-a}
&\| u_i^{\rm DG}-u_i \|_{L^2(\Omega)} \leq C \| (T-T^{\rm DG})u_i \|_{L^2(\Omega)},  \\[1ex]\label{source-eigenvalue-b}
 &|\lambda^{\rm DG}_i - \lambda_i| \leq C \|(T-T^{\rm DG})u_i\|_{\rm DG} \inf_{v\in \mathcal{S}^K_{h}(\Omega)} \|u_i-v\|_{\rm DG} + C \| (T-T^{\rm DG})u_i \|^2_{L^2(\Omega)}, \\[1ex]\label{source-eigenvalue-c}
 &\| u_i^{\rm DG}-u_i \|_{\rm DG} \leq C \| (T-T^{\rm DG})u_i \|_{\rm DG} + C \| (T-T^{\rm DG})u_i \|_{L^2(\Omega)}.
\end{align}
\end{lemma}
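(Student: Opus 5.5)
The plan is to place Lemma \ref{thm:eigen-source-bound} inside the abstract Babu\v{s}ka--Osborn spectral approximation theory for compact operators, adapted to the nonconforming DG setting as in \cite{li2019dg,li2025dgiga}. The first step is to collect the properties of $T$ and $T^{\rm DG}$. Without loss of generality we shift $V$ so that $a(\cdot,\cdot)$ is coercive on $H^1_{\#}(\Omega)$; this changes neither eigenvectors nor eigenvalue differences. Then $T$ is self-adjoint, compact, and positive on $L^2_{\#}(\Omega)$.

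For $T^{\rm DG}$, since $C_\sigma$ is large, the SIPG form satisfies coercivity and continuity with respect to $\|\cdot\|_{\rm DG}$ on $\mathcal S^K_h(\Omega)$. This uses inverse trace inequalities for both component spaces: $\|\nabla v\|_{L^2(\Gamma)}\le Ch^{-1/2}\|\nabla v\|_{L^2(\Omega_{\rm in})}$ for splines, and $\|\nabla v\|_{L^2(\Gamma)}\le CK^{1/2}\|\nabla v\|_{L^2(\Omega_{\rm out})}$ for restricted plane waves. The latter is the one ingredient specific to IGA-PW, and it is exactly why $\sigma\sim K+h^{-1}$. Consequently $T^{\rm DG}$ is well defined, self-adjoint in $L^2$, and uniformly bounded from $L^2$ into the DG norm.

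The second step is convergence in operator norm. Lemma \ref{thm:T-approximate} applies to every $f\in L^2$, because $Tf\in H^2$ locally and the weakest regularity ($s_{\rm in},s_{\rm out}$ close to $2$, with Sobolev extension giving (SE)) already yields a rate $o(1)$ when $h=O(K^{-1})$. Combining this with $\|\cdot\|_{L^2}\le C\|\cdot\|_{\rm DG}$, we get $\|T-T^{\rm DG}\|_{\mathcal L(L^2)}\to0$. Classical spectral perturbation theory then gives $\lambda_i^{\rm DG}\to\lambda_i$, since the eigenvalues of $T^{\rm DG}$ are reciprocals of the $\lambda^{\rm DG}$. It also gives \eqref{source-eigenvalue-a}: the gap between $M(\lambda_i)$ and the discrete spectral space is bounded by $C\|(T-T^{\rm DG})|_{M(\lambda_i)}\|$, and since $M(\lambda_i)$ is finite dimensional this reduces to $\|(T-T^{\rm DG})u_i\|_{L^2}$ for a suitable normalized $u_i$. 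We take $u_i$ to be the normalized $L^2$ projection of $u_i^{\rm DG}$ onto $M(\lambda_i)$.

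The third step is \eqref{source-eigenvalue-c}. We write
\[u_i^{\rm DG}-u_i=\lambda_i^{\rm DG}T^{\rm DG}u_i^{\rm DG}-\lambda_iTu_i=\lambda_i(T^{\rm DG}-T)u_i+\lambda_i^{\rm DG}T^{\rm DG}(u_i^{\rm DG}-u_i)+(\lambda_i^{\rm DG}-\lambda_i)T^{\rm DG}u_i.\]
We then use the DG boundedness of $T^{\rm DG}$ from $L^2$, together with \eqref{source-eigenvalue-a}. For the last term we need $|\lambda_i^{\rm DG}-\lambda_i|\lesssim\|(T-T^{\rm DG})u_i\|_{L^2}$, which follows from a crude version of the eigenvalue identity below.

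The final step is \eqref{source-eigenvalue-b}, and I expect it to be the main obstacle because of the nonconformity. The tool is an identity of the form
\[\lambda_i^{\rm DG}-\lambda_i=\frac{a^{\rm DG}(u_i-u_i^{\rm DG},u_i-u_i^{\rm DG})-\lambda_i\|u_i-u_i^{\rm DG}\|^2+\mathcal E(u_i,u_i-u_i^{\rm DG})}{\|u_i^{\rm DG}\|^2}.\]
Here $a^{\rm DG}$ is extended to $H^{3/2+\epsilon}$-regular functions, which is valid since $u_i$ is smooth near $\Gamma$. The term $\mathcal E$ is a consistency term that vanishes by Galerkin consistency, because $[u_i]=0$ and $[\nabla u_i]=0$ on $\Gamma$. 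The quadratic term cannot simply be bounded by $\|u_i-u_i^{\rm DG}\|_{\rm DG}^2$, since the flux terms of $u_i$ on $\Gamma$ are not controlled by the discrete inverse estimate. Instead we insert an arbitrary $v\in\mathcal S^K_h$ and split $u_i-u_i^{\rm DG}=(u_i-v)+(v-u_i^{\rm DG})$. We then use the Galerkin orthogonality $a^{\rm DG}(Tu_i-T^{\rm DG}u_i,w)=0$ for $w\in\mathcal S^K_h$, together with continuity of $a^{\rm DG}$ in an augmented norm that includes $h^{1/2}\|\{\nabla\cdot\}\|_{L^2(\Gamma)}$, the flux of $u_i-v$ being bounded by local trace estimates. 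This yields the product of $\|(T-T^{\rm DG})u_i\|_{\rm DG}$ and $\inf_v\|u_i-v\|_{\rm DG}$, plus the $L^2$-squared term coming from $\lambda_i\|u_i-u_i^{\rm DG}\|^2$ and \eqref{source-eigenvalue-a}.
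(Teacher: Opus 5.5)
Your overall route is the one the paper relies on: it defers the proof to the Babu\v{s}ka--Osborn-type DG spectral framework of the cited works. However, Step~2, on which everything else rests, does not go through as written, for two reasons.

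First, for a general $f\in L^2_{\#}(\Omega)$ you only have $s_{\rm out}=2$, so the last term of Lemma~\ref{thm:T-approximate} is $K^{-3/2}h^{-1/2}$. This need not tend to zero under $h=O(K^{-1})$ alone (take $h=K^{-4}$), so $\|T-T^{\rm DG}\|_{\mathcal L(L^2)}\to0$ is not established. You need a lower bound on $h$, such as $h\sim K^{-\alpha}$ with $\alpha<3$ (as in Lemma~\ref{thm:error_L2}). Alternatively, use a sharper interpolant whose spline part matches the trace of $P_Ku$ on $\Gamma$, so that the penalty only sees the spline trace error.

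Second, the inverse trace inequality you single out as the IGA-PW ingredient, $\|\nabla v\|_{L^2(\Gamma)}\le CK^{1/2}\|\nabla v\|_{L^2(\Omega_{\rm out})}$ on $X_K(\Omega_{\rm out})$, is false. Trigonometric polynomials restricted to a proper subset behave like algebraic polynomials at its boundary, and the sharp factor is $K$. Already for $d=1$ (period $2\pi$, $\Omega_{\rm out}$ recentred to the arc $|t|<\omega<\pi$), every $\sin t\,p(\cos t)$ with $\deg p<K$ equals $v'$ for some $v\in X_K$. Moreover, $\int_{|t|<\omega}|\sin t\,p(\cos t)|^2\,\dd t=2\int_{\cos\omega}^1|p(x)|^2\sqrt{1-x^2}\,\dd x$ has a bounded weight. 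By comparison with Legendre polynomials, $|v'(\omega)|^2\ge cK^2\|v'\|^2_{L^2(\Omega_{\rm out})}$ for a suitable $p$. A tangential factor $\cos^{2n}(\pi y/L)$ with $n\sim K$ carries this over to $d=2,3$. The standard SIPG coercivity argument therefore needs $\sigma\gtrsim K^2$ on the plane-wave side, which $\sigma=C_\sigma(K+h^{-1})$ provides only when $h\lesssim K^{-2}$. Both defects disappear for $h\sim K^{-\alpha}$ with $2\le\alpha<3$. Otherwise, coercivity and the uniform bound $\|T^{\rm DG}f\|_{\rm DG}\le C\|f\|_{L^2}$ must be assumed rather than derived from the estimate you wrote.

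The remaining steps can be repaired once uniform convergence is available.

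In \eqref{source-eigenvalue-a}, finite dimensionality only says that the supremum defining $\|(T-T^{\rm DG})|_{M(\lambda_i)}\|$ is attained somewhere, not at the projection of $u_i^{\rm DG}$ that you then choose. Use instead, with $\mu=1/\lambda$ throughout, $R_z(T)=(z-T)^{-1}$ and the spectral projections $E,E^{\rm DG}$,
\[
u_i^{\rm DG}-Eu_i^{\rm DG}=(E^{\rm DG}-E)u_i^{\rm DG}=\frac{1}{2\pi\mathrm{i}}\oint\frac{R_z(T)(T^{\rm DG}-T)u_i^{\rm DG}}{z-\mu_i^{\rm DG}}\,\dd z .
\]
This gives $\|u_i^{\rm DG}-Eu_i^{\rm DG}\|_{L^2}\le C\|(T-T^{\rm DG})u_i\|_{L^2}+C\|T-T^{\rm DG}\|\,\|u_i^{\rm DG}-u_i\|_{L^2}$, and the last term can be absorbed.

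In \eqref{source-eigenvalue-c}, the bound $|\lambda_i^{\rm DG}-\lambda_i|\lesssim\|(T-T^{\rm DG})u_i\|_{L^2}$ cannot come from your eigenvalue identity. That identity contains $a^{\rm DG}(u_i-u_i^{\rm DG},u_i-u_i^{\rm DG})$, which is the quantity being estimated. Use instead $(\mu_i^{\rm DG}-\mu_i)(u_i^{\rm DG},u_i)=(u_i^{\rm DG},(T^{\rm DG}-T)u_i)$ together with $(u_i^{\rm DG},u_i)\to1$.

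Your sketch of \eqref{source-eigenvalue-b} works once the splitting is made explicit. Write $u_i-u_i^{\rm DG}=\lambda_i(T-T^{\rm DG})u_i+w$ with $w=\lambda_iT^{\rm DG}u_i-u_i^{\rm DG}\in\mathcal S^K_h(\Omega)$. Galerkin orthogonality handles the first piece. Consistency gives $a^{\rm DG}(w,u_i-u_i^{\rm DG})=(w,\lambda_iu_i-\lambda_i^{\rm DG}u_i^{\rm DG})$, which is $O(\|(T-T^{\rm DG})u_i\|_{L^2}^2)$ up to an absorbable term $\|(T-T^{\rm DG})u_i\|_{L^2}\,|\lambda_i^{\rm DG}-\lambda_i|$.
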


Combining Lemmas~\ref{thm:T-approximate}, \ref{thm:error_L2}, and \ref{thm:eigen-source-bound} with the regularity of the eigenspace in \eqref{eq:eigenspace}, we can obtain the final error estimates in Theorem~\ref{them:convergence-rate}.

\bibliographystyle{plain}
\bibliography{ref}

@article{castillo2002performance,
  title={{Performance of discontinuous Galerkin methods for elliptic PDEs}},
  author={Castillo, P.},
  journal={SIAM J. Sci. Comput.},
  volume={24},
  number={2},
  pages={524--547},
  year={2002},
  publisher={SIAM}
}

@article{wathen2015preconditioning,
  author  = {Wathen, A.J.},
  title   = {Preconditioning},
  journal = {Acta Numer.},
  volume  = {24},
  year    = {2015},
  pages   = {329--376},
  doi     = {10.1017/S0962492915000021}
}

@article{holzmann2005optimized,
  author  = {Holzmann, M. and Bernu, B.},
  title   = {Optimized periodic \(1/r\) {Coulomb} potential in two dimensions},
  journal = {J. Comput. Phys.},
  volume  = {206},
  number  = {1},
  year    = {2005},
  pages   = {111--121},
  doi     = {10.1016/j.jcp.2004.11.037}
}

@article{natoli1995optimized,
  author  = {Natoli, V. and Ceperley, D.M.},
  title   = {An optimized method for treating long-range potentials},
  journal = {J. Comput. Phys.},
  volume  = {117},
  number  = {1},
  year    = {1995},
  pages   = {171--178},
  doi     = {10.1006/jcph.1995.1054}
}

@article{arnold2002unified,
  author  = {Arnold, D.N. and Brezzi, F. and Cockburn, B. and Marini, L.D.},
  title   = {Unified analysis of discontinuous {Galerkin} methods for elliptic problems},
  journal = {SIAM J. Numer. Anal.},
  volume  = {39},
  number  = {5},
  year    = {2002},
  pages   = {1749--1779},
  doi     = {10.1137/S0036142901384162}
}

@book{martin05,
  author={R.M. Martin},
  title={Electronic Structure: Basic Theory and Practical Methods},
  publisher={Cambridge University Press},
  year={2005},
}

@article{lin2019numerical,
  title={{Numerical methods for {K}ohn-{S}ham density functional theory}},
  author={Lin, L. and Lu, J. and Ying, L.},
  journal={Acta Numer.},
  volume={28},
  pages={405--539},
  year={2019},
  publisher={Cambridge University Press}
}

@article{DG_IGA_Eigenvalue_Codes,
  title={{https://github.com/baoweilai/IGA-PW}},
  author={IGA-PW},
  year={2026}
}

@article{hohenberg1964inhomogeneous,
  title={{Inhomogeneous electron gas}},
  author={Hohenberg, P. and Kohn, W.},
  journal={Phys. Rev.},
  volume={136},
  number={3B},
  pages={B864--B871},
  year={1964},
  publisher={American Physical Society}
}

@article{saad2010numerical,
  title={{Numerical methods for electronic structure calculations of materials}},
  author={Saad, Y. and Chelikowsky, J.R. and Shontz, S.M.},
  journal={SIAM Rev.},
  volume={52},
  number={1},
  pages={3--54},
  year={2010},
  publisher={SIAM}
}

@article{PRIMME2017algorithm,
  title={{PRIMME\_SVDS: A high-performance preconditioned SVD solver for accurate large-scale computations}},
  author={L. Wu and E. Romero and A. Stathopoulos},
  journal={SIAM J. Sci. Comput.},
  volume={39},
  number={5},
  pages={248-271},
  year={2017},
}

@article{kohn1965self,
  title={Self-consistent equations including exchange and correlation effects},
  author={Kohn, W. and Sham, L.J.},
  journal={Phys. Rev.},
  volume={140},
  number={4A},
  pages={A1133},
  year={1965},
  publisher={APS}
}

@misc{pitaevskii2003bose,
  title={{Bose-Einstein Condensation, Clarendon}},
  author={Pitaevskii, L.P. and Stringari, S.},
  year={2003},
  publisher={Oxford}
}

@article{duvigneau2018isogeometric,
  title={{Isogeometric analysis for compressible flows using a discontinuous Galerkin method}},
  author={Duvigneau, R.},
  journal={Comput. Methods Appl. Mech. Engrg.},
  volume={333},
  pages={443--461},
  year={2018},
  publisher={Elsevier}
}

@article{moore2019space,
  title={{Space-time multipatch discontinuous Galerkin isogeometric analysis for parabolic evolution problems}},
  author={Moore, S.E.},
  journal={SIAM J. Numer. Anal.},
  volume={57},
  number={3},
  pages={1471--1493},
  year={2019},
  publisher={SIAM}
}

@article{cimrman2018isogeometric,
  title={Isogeometric analysis in electronic structure calculations},
  author={Cimrman, R. and Nov{\'a}k, M. and Kolman, R. and T\r{u}ma, M. and Vack{\'a}{\v{r}}, J.},
  journal={Math. Comput. Simul.},
  volume={145},
  pages={125--135},
  year={2018},
  publisher={Elsevier}
}

@article{cimrman2018convergence,
  title={{Convergence study of isogeometric analysis based on B{\'e}zier extraction in electronic structure calculations}},
  author={Cimrman, R. and Nov{\'a}k, M. and Kolman, R. and T\r{u}ma, M. and Ple{\v{s}}ek, J. and Vack{\'a}{\v{r}}, J.},
  journal={Appl. Math. Comput.},
  volume={319},
  pages={138--152},
  year={2018},
  publisher={Elsevier}
}

@article{hughes2005isogeometric,
  title={{Isogeometric analysis: CAD, finite elements, NURBS, exact geometry and mesh refinement}},
  author={Hughes, T.J.R. and Cottrell, J.A. and Bazilevs, Y.},
  journal={Comput. Methods Appl. Mech. Engrg.},
  volume={194},
  number={39-41},
  pages={4135--4195},
  year={2005},
  publisher={Elsevier}
}

@article{pask2001finite,
  title={Finite-element methods in electronic-structure theory},
  author={Pask, J.E. and Klein, B.M. and Sterne, P.A. and Fong, C.Y.},
  journal={Comput. Phys. Commun.},
  volume={135},
  number={1},
  pages={1--34},
  year={2001},
  publisher={Elsevier}
}

@article{batcho2000computational,
  title={Computational method for general multicenter electronic structure calculations},
  author={Batcho, P.F.},
  journal={Phys. Rev. E},
  volume={61},
  number={6},
  pages={7169},
  year={2000},
  publisher={APS}
}

@article{proserpio2020framework,
  author  = {Proserpio, D. and Ambati, M. and De Lorenzis, L. and Kiendl, J.},
  title   = {A framework for efficient isogeometric computations of phase-field brittle fracture in multipatch shell structures},
  journal = {Comput. Methods Appl. Mech. Engrg.},
  volume  = {372},
  pages   = {113363},
  year    = {2020},
  doi     = {10.1016/j.cma.2020.113363}
}

@article{CHAN201822,
title = {{Multi-patch discontinuous Galerkin isogeometric analysis for wave propagation: Explicit time-stepping and efficient mass matrix inversion}},
journal = {Comput. Methods Appl. Mech. Engrg.},
volume = {333},
pages = {22--54},
year = {2018},
issn = {0045-7825},
doi = {https://doi.org/10.1016/j.cma.2018.01.022},
url = {https://www.sciencedirect.com/science/article/pii/S0045782518300240},
author = {J. Chan and J.A. Evans}
}

@article{da2014mathematical,
  title={Mathematical analysis of variational isogeometric methods},
  author={ Beir\~{a}o da Veiga, L. and Buffa, A. and Sangalli, G. and V{\'a}zquez, R.},
  journal={Acta Numer.},
  volume={23},
  pages={157--287},
  year={2014},
  publisher={Cambridge University Press}
}

@book{schumaker2007spline,
  title={{Spline Functions: Basic Theory}},
  author={Schumaker, L.L.},
  year={2007},
  publisher={{Cambridge University Press}}
}

@article{maday2019regularity,
  title={{Regularity and $hp$ discontinuous Galerkin finite element approximation of linear elliptic eigenvalue problems with singular potentials}},
  author={Maday, Y. and Marcati, C.},
  journal={Math. Models Methods Appl. Sci.},
  volume={29},
  number={08},
  pages={1585--1617},
  year={2019},
  publisher={World Scientific}
}

@article{andersen75,
  title={Linear methods in band theory},
  author={Andersen, O.K.},
  journal={Phys. Rev. B},
  volume={12},
  number={8},
  pages={3060--3083},
  year={1975},
}

@article{antonietti06,
	title={Discontinuous {G}alerkin approximation of the {L}aplace eigenproblem},
	author={Antonietti, P.F. and Buffa, A. and Perugia, I.},
	journal={Comput. Methods Appl. Mech. Engrg.},
	volume={195},
	number={25},
	pages={3483--3503},
	year={2006},
}

@article{arnold82,
	title={An Interior Penalty Finite Element Method with Discontinuous Elements},
	author={Arnold, D.N.},
	journal={SIAM J. Numer. Anal.},
	volume={19},
	number={4},
	pages={742--760},
	year={1982},
}

@article{chen13,
	title={Numerical analysis of finite dimensional approximations of {K}ohn–{S}ham models},
	author={Chen, H. and Gong, X. and He, L. and Yang, Z. and A. Zhou},
	journal={Adv. Comput. Math.},
	volume={38},
	number={2},
	pages={225--256},
	year={2013},
}

@article{flad08,
	title={Asymptotic regularity of solutions to {H}artree-{F}ock equations with {C}oulomb potential},
	author={Flad, H.J. and Schneider, R. and Schulze, B.W.},
	journal={Math. Meth. Appl. Sci.},
	volume={31},
	number={18},
	pages={2172--2201},
	year={2008},
}

@article{fournais04,
	title={Analyticity of the density of electronic wavefunctions},
	author={Fournais, S. and Hoffmann-Ostenhof, M. and Hoffmann-Ostenhof, T. and \O{}stergaard S\o{}rensen, T.},
	journal={Ark. Mat.},
	volume={42},
	number={1},
	pages={87--106},
	year={2004},
}

@article{fournais07,
	title={Non-Isotropic Cusp Conditions and Regularity of the Electron Density of Molecules at the Nuclei},
	author={Fournais, S. and Hoffmann-Ostenhof, M. and Hoffmann-Ostenhof, T. and \O{}stergaard S\o{}rensen, T.},
	journal={Ann. Henri Poincar\'{e}},
	volume={8},
	number={4},
	pages={731--748},
	year={2007},
}

@article{herring40,
	title={A New Method for Calculating Wave Functions in Crystals},
	author={Herring, C.},
	journal={Phys. Rev.},
	volume={57},
	number={12},
	pages={1169--1177},
	year={1940},
}

@article{sjostedt00,
	title={An alternative way of linearizing the augmented plane-wave method},
	author={E. Sj\"{o}stedt and L. Nordstr\"{o}m and D.J. Singh},
	journal={Solid State Commun.},
	volume={114},
	number={1},
	pages={15--20},
	year={2000},
}

@article{slater37,
	title={Wave functions in a periodic potential},
	author={J.C. Slater},
	journal={Phys. Rev.},
	volume={51},
	number={10},
	pages={846--851},
	year={1937},
}

@incollection{cances00,
	author={Canc\`{e}s, E.},
	title={{SCF} algorithms for {HF} electronic calculations},
	bookTitle={Mathematical Models and Methods for Ab Initio Quantum Chemistry},
	editor={M. Defranceschi and C. Le Bris},
	year={2000},
	publisher={Springer Berlin Heidelberg},
	pages={17--43},
	volume={74},
}

@article{motamarri2013higher,
  author  = {Motamarri, P. and Nowak, M.R. and Leiter, K. and Knap, J. and Gavini, V.},
  title   = {Higher-order adaptive finite-element methods for {Kohn--Sham} density functional theory},
  journal = {J. Comput. Phys.},
  volume  = {253},
  year    = {2013},
  pages   = {308--343}
}

@article{bao2012adaptive,
  author  = {Bao, G. and Hu, G. and Liu, D.},
  title   = {An {$h$}-adaptive finite element solver for the calculations of the electronic structures},
  journal = {J. Comput. Phys.},
  volume  = {231},
  number  = {14},
  year    = {2012},
  pages   = {4967--4979}
}

@article{chen2014adaptive,
  author  = {Chen, H. and Dai, X. and Gong, X. and He, L. and Zhou, A.},
  title   = {Adaptive finite element approximations for {Kohn--Sham} models},
  journal = {Multiscale Model. Simul.},
  volume  = {12},
  number  = {4},
  year    = {2014},
  pages   = {1828--1869}
}

@article{masud2012bsplines,
  author  = {Masud, A. and Kannan, R.},
  title   = {{B}-splines and {NURBS} based finite element methods for {Kohn--Sham} equations},
  journal = {Comput. Methods Appl. Mech. Engrg.},
  volume  = {241--244},
  year    = {2012},
  pages   = {112--127}
}

@article{cances2016optimal,
  author  = {Canc{\`e}s, E. and Mourad, N.},
  title   = {Existence of a type of optimal norm-conserving pseudopotentials for {Kohn--Sham} models},
  journal = {Commun. Math. Sci.},
  volume  = {14},
  number  = {5},
  year    = {2016},
  pages   = {1315--1352}
}

@article{cances2012planewave,
  author  = {Canc{\`e}s, E. and Chakir, R. and Maday, Y.},
  title   = {Numerical analysis of the planewave discretization of some orbital-free and {Kohn--Sham} models},
  journal = {ESAIM Math. Model. Numer. Anal.},
  volume  = {46},
  number  = {2},
  year    = {2012},
  pages   = {341--388},
  doi     = {10.1051/m2an/2011038}
}

@article{fournais2002smooth,
  author  = {Fournais, S. and Hoffmann-Ostenhof, M. and Hoffmann-Ostenhof, T. and \O{}stergaard S\o{}rensen, T.},
  title   = {The electron density is smooth away from the nuclei},
  journal = {Commun. Math. Phys.},
  volume  = {228},
  number  = {3},
  year    = {2002},
  pages   = {401--415},
  doi     = {10.1007/s002200200668}
}

@article{chen2015apw,
  author  = {Chen, H. and Schneider, R.},
  title   = {Numerical analysis of augmented plane wave methods for full-potential electronic structure calculations},
  journal = {ESAIM Math. Model. Numer. Anal.},
  volume  = {49},
  number  = {3},
  year    = {2015},
  pages   = {755--785},
  doi     = {10.1051/m2an/2014052}
}

@article{wang2025hierarchical,
  author  = {Wang, T. and Kuang, Y. and Zhang, R. and Hu, G.},
  title   = {A hierarchical splines-based {$h$}-adaptive isogeometric solver for all-electron {Kohn--Sham} equation},
  journal = {J. Comput. Phys.},
  volume  = {534},
  year    = {2025},
  pages   = {114003}
}

@article{lin2012adaptive,
  author  = {Lin, L. and Lu, J. and Ying, L. and E, W.},
  title   = {Adaptive local basis set for {Kohn--Sham} density functional theory in a discontinuous {Galerkin} framework {I}: Total energy calculation},
  journal = {J. Comput. Phys.},
  volume  = {231},
  number  = {4},
  year    = {2012},
  pages   = {2140--2154}
}

@article{zhang2017adaptive,
  author  = {Zhang, G. and Lin, L. and Hu, W. and Yang, C. and Pask, J. E.},
  title   = {Adaptive local basis set for {Kohn--Sham} density functional theory in a discontinuous {Galerkin} framework {II}: Force, vibration, and molecular dynamics calculations},
  journal = {J. Comput. Phys.},
  volume  = {335},
  year    = {2017},
  pages   = {426--443}
}

@article{li2019dg,
  author  = {Li, X. and Chen, H.},
  title   = {A discontinuous {Galerkin} scheme for full-potential electronic structure calculations},
  journal = {J. Comput. Phys.},
  volume  = {385},
  year    = {2019},
  pages   = {33--50}
}

@article{li2025dgiga,
  author  = {Li, X. and Meng, X.},
  title   = {Numerical analysis of multi-patch discontinuous {Galerkin} isogeometric method for full-potential electronic structure calculations},
  journal = {J. Comput. Appl. Math.},
  volume  = {477},
  year    = {2025},
  pages   = {117125}
}

@book{lebris03,
	author={Le Bris, C.},
	title={Handbook of Numerical Analysis},
	volume={X, special issue: Computational Chemistry},
	publisher={North-Holland},
	year={2003},
}

\end{document}